\newcommand{\bN}{\mathbb{N}}
\newcommand{\bZ}{\mathbb{Z}}
\newcommand{\eps}{\varepsilon}
\renewcommand{\ge}{\geqslant}
\renewcommand{\le}{\leqslant}
\newcounter{dummy} \numberwithin{dummy}{section}
\theoremstyle{definition}
\newtheorem{prop}[dummy]{Proposition}
\newtheorem{thm}[dummy]{Theorem}
\newtheorem{lemma}[dummy]{Lemma}
\newtheorem{eg}[dummy]{Example}
\newtheorem{remark}[dummy]{Remark}
\newtheorem{question}[dummy]{Question}
\newtheorem*{thm*}{Theorem}
\newtheorem*{warning*}{Warning}
\newtheorem{thmx}{Theorem}
\title[On sets of popular differences in trees]{Direct and inverse results for popular differences in trees of positive dimension}
\date{\today}
\author[Alexander Fish and Leo Jiang]{Alexander Fish and Leo Jiang; \\ with a joint appendix with Ilya D. Shkredov} 
\address[Alexander Fish]{School of Mathematics and Statistics F07, University of Sydney, NSW 2006, Australia}
\email{alexander.fish@sydney.edu.au}
\address[Leo Jiang]{Department of Mathematics, University of Toronto, Toronto, ON M5S 2E4, Canada}
\email{ljiang@math.toronto.edu}
\address[Ilya D. Shkredov]{Steklov Mathematical Institute, ul. Gubkina, 8, Moscow, Russia, 119991,
and IITP RAS, Bolshoy Karetny per. 19, Moscow, Russia, 127994, and London Institute for Mathematical Sciences, 21 Albemarle St., UK}
\email{ilya.shkredov@gmail.com}
\keywords{Ramsey theory on trees, return times, inverse theorems, popular difference sets}
\begin{document}

\begin{abstract} 
We establish analogues for trees of results relating the density of a set $E \subset \mathbb{N}$, the density of its set of popular differences, and the structure of~$E$. 
To obtain our results, we formalise a correspondence principle of Furstenberg and Weiss which relates combinatorial data on a tree to the dynamics of a Markov process. 
Our main tools are Kneser-type inverse theorems for sets of return times in measure-preserving systems. 
In the ergodic setting we use a recent result of the first author with Bj\"orklund and Shkredov and a stability-type extension (proved jointly with Shkredov); we also prove a new result for non-ergodic systems. 
\end{abstract}

\maketitle

\section{Introduction}

In \cite{FW} Furstenberg and Weiss initiated the use of dynamical methods in the study of Ramsey theoretic questions for trees. 
They proved a Szemer\'{e}di-type theorem using a multiple recurrence result for a class of Markov processes (a purely combinatorial proof was later given by Pach, Solymosi, and Tardos \cite{PST}). 
More precisely, they showed that finite replicas of the full binary tree could always be found in (infinite) trees of positive growth rate. 
It is then a natural question to quantify the abundance of finite configurations in a tree in relation to its size as measured by its upper Minkowski and Hausdorff dimensions. 

To begin, we review the analogous question in the integer setting. 
Specifically, we consider the abundance of configurations in a subset $E \subset \mathbb{N}$.
Recall that the \emph{upper density} and \emph{upper Banach density} of $E$ are 
\[\overline{d}(E) = \limsup_{N \to \infty} \frac{|E \cap \{0,\ldots,N\}|}{N+1}, \quad d^\ast(E) = \limsup_{N-M \to \infty} \frac{|E \cap \{M,\ldots,N\}|}{N-M+1}.\]
The abundance of $2$-term arithmetic progressions in $E$ can be related to the density of $E$ in the following way. 
Consider the sets of \emph{popular differences of $E$} with respect to $\overline{d}$ and $d^\ast$ defined by 
 \[
\overline{\Delta}_0(E) = \{n \in \mathbb{N} \colon \overline{d}(E \cap (E-n))>0\}, \quad \Delta_0^\ast(E) = \{n \in \mathbb{N} \colon d^\ast(E \cap (E-n))>0\}.
\]
Furstenberg's correspondence principle \cite{Fu0} states that there exists a measure-preserving system $(X,\mathscr{B},\nu,S)$ and $A \in \mathscr{B}$ with $\nu(A) = \overline{d}(E)$ such that for all integers $k \ge 1$ and $0=n_1,\ldots,n_k \in \mathbb{N}$, 
\[\overline{d}\left((E-n_1) \cap \cdots \cap (E-n_k)\right) \ge \nu\left(S^{-n_1}A \cap \cdots \cap S^{-n_k}A\right).\]
Taking $k=2$, it follows that $\overline{\Delta}_0(E)$ contains 
\[
\mathcal{R}=\mathcal{R}(A) = \{n \in \mathbb{N} \colon \nu(A \cap S^{-n}A)>0\},
\]
the set of \emph{return times of $A$}. 
Applying the mean ergodic theorem then gives 
\begin{equation}\label{pop_diff}
\underline{d}(\overline{\Delta}_0(E)) \ge \underline{d}(\mathcal{R}) \ge \lim_{N \to \infty} \frac{1}{N+1} \sum_{n=0}^{N}\frac{\nu(A \cap S^{-n}A)}{\nu(A)} \ge \nu(A) = \overline{d}(E),
\end{equation}
where the lower density $\underline{d}$ is defined for $E \subset \mathbb{N}$ by
\[\underline{d}(E) = \liminf_{N \to \infty} \frac{|E \cap \{0,\ldots,N\}|}{N+1}.\]
If in the above the upper density is replaced by the upper Banach density then $\nu$ can further be chosen to be ergodic \cite[Proposition 3.9]{Fur4} (see \cite[Proposition 3.1]{BHK} for an explicit proof). 

Following Furstenberg and Weiss \cite{FW}, we formulate a correspondence principle for arbitrary finite configurations in a tree and use it to obtain analogues of the inequality~(\ref{pop_diff}). 
We then analyse the case of equality in~(\ref{pop_diff}) and its analogues for trees using inverse theorems for the set of return times. 
In the ergodic situation we use a result of Bj\"{o}rklund, the first author, and Shkredov \cite{BFS} and a stability-type extension proved jointly with Shkredov in Appendix \ref{appendix}, while in the general case we prove a slightly weaker statement (Theorem~\ref{first_inverse_thm_measure}). 
Using these we obtain inverse theorems for inequality~(\ref{pop_diff}): a tree for which equality holds must contain arbitrarily long ``arithmetic progressions'' with a fixed common difference. 

\subsection{Main Results}

To describe our results, we first summarise the necessary definitions (see Section \ref{Markov_processes} for precise formulations). 
For clarity of exposition, in this introduction we restrict our attention to the case $r=2$ of our results and make corresponding simplifications to the notation.

Fix an integer $q \ge 2$. 
In this paper a tree can be visualised as a directed graph $T$ with a distinguished vertex (the root) having no incoming edges, such that each vertex has between $1$ and $q$ outgoing edges and each nonroot vertex has exactly one incoming edge. 
(Technically, we work with the vertices of the graph with the partial order induced by directed paths.)  
The ``size'' of $T$ can be quantified by its upper Minkowski and Hausdorff dimensions $\overline{\dim}_MT$ and $\dim T$, which are defined by an identification of such trees with closed subsets of $[0,1]$. 

\subsubsection{Tree analogues of popular difference sets}

A $k$-term arithmetic progression ($k$-AP) in $E \subset \mathbb{N}$ can be viewed as an affine map $\{0,\ldots,k-1\} \to E$. 
We consider ``affine'' maps satisfying certain branching conditions from configurations~$C$ (``finite trees'') to trees $T$. 
If there exists such a map with ``common difference'' $n$ taking the root of the configuration to $v \in T$, we say that $v \in C_n = C_n(T)$. 
The set $C_n$ corresponds to the set $E \cap (E-n) \cap \cdots \cap (E-(k-1)n)$ for $k$-APs in $E \subset \mathbb{N}$. 
Using extensions of upper density and upper Banach density to subsets of trees, we define sets of ``generic parameters''
\[\overline{G}(C) = \{n \in \mathbb{N} \colon \overline{d}(C_n) > 0 \}, \qquad G^\ast(C) = \{n \in \mathbb{N} \colon d^\ast(C_n) > 0 \}.\]
We also introduce certain configurations $F$ and $D$ which are analogues of $2$-APs, and their generic parameters can be interpreted as popular differences for trees. 
In particular, our first result is a version of (\ref{pop_diff}):

\begin{thmx}[= Theorem \ref{Theorem4.1} and Theorem \ref{direct-D-upper} for $r=2$]
\label{first_direct_thm}
	For any tree $T$ we have 
	\[\underline{d}(\overline{G}(F)) \ge \underline{d}(\overline{G}(D)) \ge \overline{\dim}_M T \quad \text{ and } \quad \underline{d}(G^\ast(F)) \ge \underline{d}(G^\ast(D)) \ge \dim T.\] 
\end{thmx}

\subsubsection{Inverse theorems for sets of return times}
\label{subsection_inverse_thms_ergodic}

Given the direct result Theorem~\ref{first_direct_thm}, we are interested in characterising trees such that equality holds (or almost holds). 
To illustrate the ideas we consider here the situation when equality is (almost) achieved in (\ref{pop_diff}), which is the analogous question for subsets of $\mathbb{N}$. 
Observe that the density of the set of return times of $A$ is then close to the measure of $A$. 
It is natural to expect in this situation that the dynamics of $A$ under $S$ is rigid in some way, and this is indeed the case. 

Let $(X,\mathscr{B},\nu,S)$ be a measure-preserving system, and let $A$ be a measurable set with $\nu(A) > 0$ and set of return times $\mathcal{R}$. 
Using a theorem of Kneser we prove the following result: 

\begin{thmx}[= Theorem \ref{first_inverse_thm_measure}]\label{ThmB}
If $\overline{d}(\mathcal{R}) = \nu(A) > 0$, then there exists an integer $m \ge 1$ such that up to $\nu$-null sets
\[X = \bigsqcup_{i=0}^{m-1} S^{-i} A. \]
\end{thmx} 

\begin{question}
\label{q1}
Does the assumption $\underline{d}(\mathcal{R}) = \nu(A)$ suffice to prove the conclusion of Theorem~\ref{ThmB}? 
\end{question}

If $\nu$ is ergodic then Question \ref{q1} has an affirmative answer, and further there is an inverse result for cases of almost equality. 
The following theorem is an easy corollary of results by Bj\"orklund, the first author, and Shkredov in \cite{BFS}: 

\begin{thm}[= Theorem \ref{second_inverse_thm_measure}]\label{Thm5.4}
If $(X,\mathscr{B},\nu,S)$ is ergodic and 
\[0 < \underline{d}(\mathcal{R}) < \frac{3}{2} \nu(A),\]
then there exists an integer $m \ge 1$ such that $\mathcal{R} = m \bN$ and $X = \bigsqcup_{i=0}^{m-1} S^{-i}\left( \bigcup_{j=0}^{\infty}S^{-jm} A \right)$ up to $\nu$-null sets. 
\end{thm}

\begin{remark}
Example 1.2 in \cite{BFS} shows that for every $\beta > 1$ there exists a non-ergodic measure-preserving system $(X,\mathscr{B},\nu,S)$ and $A \in \mathscr{B}$ of arbitrarily small measure such that $\overline{d}(\mathcal{R}) \le \beta \nu(A)$ and there is no $m \ge 1$ such that $\mathcal{R} = m\mathbb{N}$. 
\end{remark}

\subsubsection{Inverse results for popular difference sets}

As a corollary of Theorem \ref{ThmB} and Furstenberg's correspondence principle we immediately obtain the following inverse-type result for (\ref{pop_diff}): 
\begin{prop}
\label{first_inverse_thm_sets}
Assume that $E \subset \mathbb{N}$ satisfies $\overline{d}(\overline{\Delta}_0(E)) = \overline{d}(E) >0$. 
Then there exists $m \ge 1$ such that $m \mathbb{N} \subset \overline{\Delta}_0(E)$ and $\overline{d}(\overline{\Delta}_0(E)) = \overline{d}(E) = m^{-1}$. 
Moreover, for every $k \ge 2$ 
\[
\overline{d}\left( E \cap (E-m) \cap \ldots \cap (E- (k-1)m) \right)  = \overline{d}(E). 
\]
\end{prop}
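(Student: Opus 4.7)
The plan is to deduce this directly from Theorem~\ref{first_inverse_thm_measure} via Furstenberg's correspondence principle, using the correspondence exactly as it appears in inequality~(\ref{pop_diff}). First, apply the correspondence principle to $E$ to obtain a measure-preserving system $(X,\mathscr{B},\nu,S)$ and a set $A \in \mathscr{B}$ with $\nu(A) = \overline{d}(E)$ such that
\[
\overline{d}\bigl(E \cap (E-n_1) \cap \cdots \cap (E-n_m)\bigr) \ge \nu\bigl(A \cap S^{-n_1}A \cap \cdots \cap S^{-n_m}A\bigr)
\]
for all $m \ge 1$ and $n_1,\dots,n_m \in \mathbb{N}$. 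Denote by $\mathcal{R}$ the set of return times of $A$; then $\mathcal{R} \subset \overline{\Delta}_0(E)$ by the $m=1$ case.

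Next, I would use the chain of inequalities~(\ref{pop_diff}) together with the hypothesis $\overline{d}(\overline{\Delta}_0(E)) = \overline{d}(E)$ to pin down $\overline{d}(\mathcal{R})$: indeed,
\[
\nu(A) \le \underline{d}(\mathcal{R}) \le \overline{d}(\mathcal{R}) \le \overline{d}(\overline{\Delta}_0(E)) = \overline{d}(E) = \nu(A),
\]
so $\overline{d}(\mathcal{R}) = \nu(A) > 0$. This is exactly the hypothesis of Theorem~\ref{first_inverse_thm_measure}, which yields an integer $k \ge 1$ such that $X = \bigsqcup_{i=0}^{k-1} S^{-i}A$ up to $\nu$-null sets. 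Two immediate consequences: first, $\nu(A) = 1/k$, which forces $\overline{d}(E) = \overline{d}(\overline{\Delta}_0(E)) = 1/k$; and second, $S^{-k}A = A$ modulo $\nu$-null sets, hence $\nu(A \cap S^{-jk}A) = \nu(A) > 0$ for every $j \ge 1$, giving $k\mathbb{N} \subset \mathcal{R} \subset \overline{\Delta}_0(E)$.

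Finally, for the \emph{moreover} assertion, fix $m \ge 2$ and apply the correspondence principle with $n_j = jk$ for $j = 1,\ldots,m-1$:
\[
\overline{d}\bigl(\{n : \{n,n+k,\ldots,n+(m-1)k\} \subset E\}\bigr) \ge \nu\bigl(A \cap S^{-k}A \cap \cdots \cap S^{-(m-1)k}A\bigr) = \nu(A),
\]
using $S^{-jk}A = A$. The reverse inequality is trivial since the set on the left is contained in $E$, so its upper density is at most $\overline{d}(E) = \nu(A)$.

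There is essentially no serious obstacle: the whole statement is a packaging of Theorem~\ref{first_inverse_thm_measure} through the correspondence principle. The only mild delicacy is the first step, namely checking that the hypothesis $\overline{d}(\overline{\Delta}_0(E)) = \overline{d}(E)$ translates into $\overline{d}(\mathcal{R}) = \nu(A)$; this relies on the tight chain~(\ref{pop_diff}) and the inclusion $\mathcal{R} \subset \overline{\Delta}_0(E)$, both of which are already established.
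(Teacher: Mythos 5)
Your proof is correct and is exactly the route the paper intends: the paper states this proposition as an immediate corollary of Theorem~\ref{first_inverse_thm_measure} and Furstenberg's correspondence principle, and your write-up simply fills in those details (the chain~(\ref{pop_diff}) forcing $\overline{d}(\mathcal{R})=\nu(A)$, the partition $X=\bigsqcup_{i=0}^{k-1}S^{-i}A$ giving $\nu(A)=k^{-1}$ and $S^{-k}A=A$ mod null sets, and the multi-intersection form of the correspondence for the ``moreover'' clause).
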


If we consider $\Delta^\ast_0(E)$ and $d^\ast(E)$ in place of $\overline{\Delta}_0(E)$ and $\overline{d}(E)$, we can apply Theorem \ref{Thm5.4} to obtain the following inverse result:

\begin{prop}
\label{second_inverse_thm_sets}
Let $1 \le \beta < 3/2$. Assume that $E \subset \mathbb{N}$ satisfies
\[0 < \underline{d}(\Delta^\ast_0(E)) = \beta \cdot d^\ast(E). \]
Then there exists $m \ge 1$ such that $m \mathbb{N} \subset \Delta^\ast_0(E)$.
Moreover, for every $k \ge 2$ that satisfies $(1 - \beta^{-1})k < 1$
we have 
\[
d^\ast\left(E \cap (E-m) \cap \ldots \cap (E- (k-1)m)\right) > 0. 
\]
\end{prop}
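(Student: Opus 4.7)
The plan is to reduce Proposition~\ref{second_inverse_thm_sets} to Theorem~\ref{second_inverse_thm_measure} via Furstenberg's correspondence principle for the upper Banach density, using the fact (noted in the excerpt after (\ref{pop_diff})) that in the $d^\ast$ version of the principle one can take the measure-preserving system $(X,\mathscr{B},\nu,S)$ to be ergodic. Concretely, I would fix such a system together with $A \in \mathscr{B}$ satisfying $\nu(A)=d^\ast(E)$ and the inequality
\[
d^\ast\bigl(E \cap (E-n_1) \cap \cdots \cap (E-n_\ell)\bigr) \ge \nu\bigl(A \cap S^{-n_1}A \cap \cdots \cap S^{-n_\ell}A\bigr)
\]
for all $n_1,\dots,n_\ell \in \mathbb{N}$. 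Taking $\ell = 1$ shows $\mathcal{R}=\mathcal{R}(A) \subset \Delta_0^\ast(E)$, so $\underline{d}(\mathcal{R}) \le \underline{d}(\Delta_0^\ast(E)) = \beta \nu(A) < \tfrac{3}{2}\nu(A)$. The mean ergodic theorem gives $\underline{d}(\mathcal{R}) \ge \nu(A) > 0$, so the hypotheses of Theorem~\ref{second_inverse_thm_measure} hold.

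Applying Theorem~\ref{second_inverse_thm_measure} yields an integer $k \ge 1$ with $\mathcal{R}=k\mathbb{N}$ and $X = \bigsqcup_{i=0}^{k-1} S^{-i}B$ (mod $\nu$-null sets), where $B = \bigcup_{j \ge 0} S^{-jk}A$. In particular $k\mathbb{N} = \mathcal{R} \subset \Delta_0^\ast(E)$, which is the first claim. The disjoint decomposition forces $\nu(B) = 1/k$, and $S$-invariance of $\nu$ together with $S^{-k}B \subset B$ gives $S^{-k}B = B$ up to null sets, so $T := S^k$ preserves the normalised restriction $\nu_B := \nu|_B/\nu(B)$.

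For the quantitative claim, I would extract from the above the size of $A$ inside $B$. Since $\underline{d}(\mathcal{R}) = 1/k$ (the density of $k\mathbb{N}$) and $\underline{d}(\mathcal{R}) \le \beta\nu(A)$, we obtain $\nu(A) \ge 1/(k\beta)$, hence
\[
\nu_B(A) = \frac{\nu(A)}{\nu(B)} \ge \frac{1}{\beta}.
\]
The assumption $(1-\beta^{-1})m < 1$ is equivalent to $1/\beta > 1 - 1/m$, so $\nu_B(A^c) < 1/m$. A union bound then yields
\[
\nu_B\Bigl(\,\bigcup_{i=0}^{m-1} T^{-i}A^c\Bigr) \le m\,\nu_B(A^c) < 1,
\]
so $\nu\bigl(A \cap S^{-k}A \cap \cdots \cap S^{-(m-1)k}A\bigr) > 0$. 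Applying the correspondence inequality with $n_i = ik$ converts this into the desired lower bound $d^\ast\bigl(\{n : \{n,n+k,\dots,n+(m-1)k\} \subset E\}\bigr) > 0$.

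There is no serious obstacle here: once one has Theorem~\ref{second_inverse_thm_measure} and the ergodic version of the correspondence principle, the proposition follows by bookkeeping. The only subtle point is the chain of density comparisons $\nu(A) \le \underline{d}(\mathcal{R}) = 1/k \le \beta\nu(A)$ that gives $\nu_B(A) \ge 1/\beta$; this is what turns the numerical hypothesis $(1-\beta^{-1})m < 1$ into the measure bound $\nu_B(A^c) < 1/m$ needed for the union-bound step.
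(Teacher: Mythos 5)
Your proposal is correct and follows exactly the route the paper intends: the ergodic form of Furstenberg's correspondence principle reduces the statement to Theorem~\ref{second_inverse_thm_measure}, and the chain $\nu(A) \le \underline{d}(\mathcal{R}) = k^{-1} \le \beta\nu(A)$ together with the union bound inside $B = \bigcup_{j\ge 0}S^{-jk}A$ (of measure $k^{-1}$) yields the quantitative conclusion under $(1-\beta^{-1})m<1$. All the supporting details you supply (monotonicity of $\underline{d}$, $\nu(B)=k^{-1}$ from the partition, $S^{-k}B=B$ mod null sets) are sound.
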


\subsubsection{Inverse results for $\overline{G}(F)$ and $G^\ast(F)$}

Propositions \ref{first_inverse_thm_sets} and \ref{second_inverse_thm_sets} can be interpreted as saying that (almost) equality holds in (\ref{pop_diff}) for a subset $E \subset \mathbb{N}$ only if $E$ is ``similar'' to the periodic set $m\mathbb{N}$. 
In the tree setting we prove analogous results. 

For every $m \ge 1$, define $T_{m\mathbb{N}}$ to be the tree such that $v \in T_{m\mathbb{N}}$ has $q$ outgoing edges if the directed path from the root to $v$ has length a multiple of $m$, and $1$ outgoing edge otherwise. 
The inequalities in Theorem \ref{first_direct_thm} are equalities for $T_{m\mathbb{N}}$ (see Subsection \ref{equality_case}).

For every $k \ge 1$, define the configuration $V^{m,k}$ to be the first $k$ levels of $T_{m\mathbb{N}}$. 
The following two theorems are analogues of Proposition \ref{first_inverse_thm_sets} and Proposition~\ref{second_inverse_thm_sets} respectively.
\begin{thmx}[= Theorem \ref{first_inverse_thm_trees} for $r=2$]
Let $T$ be a tree.
Assume that 
\[\overline{d}(\overline{G}(F)) = \overline{\dim}_M T  > 0.
\]
Then there exists an integer $m \ge 1$ such that $\overline{\dim}_M T = m^{-1}$, and $\overline{d}(V^{m,k}) > 0$ for every $k \ge 1$.
\end{thmx}

\begin{thmx}[= Theorem \ref{second_inverse_thm_trees} for $r=2$]\label{ThmD}
		Let $T$ be a tree. 
Assume that 
\[
\underline{d}(G^\ast(F)) = \dim T > 0 \quad \text{or} \quad \overline{d}(G^\ast(D)) = \dim T > 0. 
\]
	Then there exists an integer $m \ge 1$ such that $\dim T = m^{-1}$, and $d^\ast(V^{m,k}) > 0$ for every $k \ge 1$.
\end{thmx}

\begin{remark} 
We show in Subsection \ref{sharpness} that Theorem \ref{ThmD} cannot be improved. 
Indeed, for every $\eps > 0$ there exists a tree $T_{\eps}$ such that 
\[
0 < \dim T_\eps \le \underline{d}(G^\ast(F)) < (1+\eps) \dim T_\eps
\]
and the configuration $V^{m,k}$ does not appear at all in $T_{\eps}$ for some large $k$.
\end{remark}

Our final result is another partial analogue of Proposition \ref{second_inverse_thm_sets}. 

\begin{thm}[= Theorem \ref{third_inverse_thm_trees} for $r=2$]
Let $T$ be a tree. 
Assume that there exists  $\beta < 3/2$ such that
\[
0 < \underline{d}(G^\ast(F)) = \beta \cdot \dim T. 
\]
Then there exists $m \ge 1$ with $m \bN \subset G^\ast(F)$. 
\end{thm}

\subsection*{Organisation of the paper}
After describing the combinatorial and dynamical background (Section \ref{Markov_processes}) and establishing Furstenberg--Weiss correspondence principles (Section \ref{Correspondence_principle}), in Section \ref{Direct_Theorems} we prove lower bounds for the densities of popular differences for trees. 
We then use inverse theorems for sets of return times in measure-preserving systems (Section \ref{Inverse_theorems_measure} and Appendix \ref{appendix}) to prove inverse theorems for these lower bounds (Section \ref{inverse_theorems_trees}). 

\subsection*{Acknowledgments} We thank Michael Bj\"orklund and James Parkinson. The current paper is a sequel of joint works of A.F. and I.S. with them. A.F. is grateful to Itai Benjamini for inspiring conversations on the subject during his visit to the Weizmann Institute hosted by Omri Sarig. He thanks Omri and the Weizmann Institute for hospitality. He also thanks Haotian Wu and Cecilia Gonz\'alez Tokman for fruitful discussions.
I.S. is grateful to SMRI and the School of Mathematics and Statistics at the University of Sydney for funding his visit and for their hospitality.
We also thank the anonymous referee for a thorough reading of the paper, which led to many improvements.

\section{Trees and Markov processes}
\label{Markov_processes}

Fix an integer $q \ge 2$, and for $2 \le r \le q$ define $\Lambda_r = \{0,\ldots,r-1\}$ and $\Lambda = \Lambda_q$. 
We set $\mathbb{N} = \{0,1,\ldots\}$. 

\subsection*{Combinatorial setup}

Let $\Lambda^\ast = \cup_{n=0}^\infty \Lambda^n$ be the set of finite words over $\Lambda$, where $\Lambda^0$ is the singleton comprising the empty word~$\emptyset$. 
Consider the partial order $\le$ on $\Lambda^\ast$ defined by $v \le w$ if $w$ is the concatenation $vu$ of $v$ and some $u \in \Lambda^\ast$. 
A \emph{tree} is then a nonempty subset $T \subset \Lambda^\ast$ closed under predecessors and having no maximal elements with respect to $\le$. 
We refer to elements of $T$ as \emph{vertices} (using the natural graph-theoretic terminology), and write $l(v)=n$ if $v \in T(n) = T \cap \Lambda^n$. 
Every tree contains $\emptyset$ (the \emph{root}), and for every $v \in T$ there is a tree $T^v = \{w \in \Lambda^\ast \colon vw \in T\}$. 

\begin{remark}
Trees are combinatorial realisations of closed sets in $\Lambda^\mathbb{N}$, a symbolic analogue of $[0,1]$. 
Given a tree $T$, the set 
\[\{(a_i)_{i\ge 0} \in \Lambda^\mathbb{N} \colon (a_0,\ldots,a_n) \in T \text{ for all }n \in \mathbb{N}\}\]
is closed in $\Lambda^\mathbb{N}$ (with the product of discrete topologies on $\Lambda$), and there is an inverse map sending a closed subset $A \subset \Lambda^\mathbb{N}$ to the tree 
\[\{v \in \Lambda^\ast \colon vw \in A \text{ for some }w \in \Lambda^\mathbb{N}\}.\] 
This motivates several definitions we give below. 
\end{remark}

The \emph{(upper) Minkowski dimension} of $T$ is 
\[ \overline{\dim}_M T = \limsup_{N \to \infty} \frac{\log_q|T(N)|}{N}.\]
To define the Hausdorff dimension of a tree, we first define the analogue of an irredundant open cover for trees. 
A \emph{section} of a tree $T$ is a finite subset $\Pi \subset T$ such that $|\Pi \cap \{w \in T \colon w \le v\}|=1$ for all but finitely many $v \in T$. 
Define also $l(\Pi) = \min \{l(v) \colon v \in \Pi\}$. 
Then the \emph{Hausdorff dimension} of $T$ is
\[\dim T = \inf \left\{ \lambda>0 \colon \liminf_{N \to \infty} \inf_{\substack{l(\Pi)=N \\ \Pi \text{ section of } T}} \sum_{v \in \Pi} q^{-\lambda l(v)}  < 1\right\}.\]

\begin{eg}\label{Ex.2.2}
Given $E \subset \mathbb{N}$ and $2 \le r \le q$, define the tree
\[
T_E^r = \{\emptyset\} \cup \bigcup_{i=0}^\infty \prod_{0 \le j \le i} \Gamma_j, \quad \text{ where } \Gamma_j = \begin{cases} \Lambda & \text{ if } j \in E \\ \Lambda_{r-1} & \text{ otherwise.} \end{cases}
\]
A straightforward calculation shows that  
	\begin{align*}
		\overline{\dim}_M T_E^r &= \limsup_{N \to \infty} \frac{\log_q { q^{|E \cap \{0,\ldots,N-1\}|} (r-1)^{|E^c \cap \{0,\ldots,N -1\}|}}}{N}
\\
&= \overline{d}(E) + \log_q (r-1) (1 - \overline{d}(E)).
	\end{align*}
If $E$ is a ``periodic'' set (such as $m\mathbb{N}$) then $T_E^r$ is ``self-similar'' and $\overline{\dim}_M T_E^r = \dim T_E^r$. 
\end{eg}

Elements of $\Lambda^\ast$ correspond to cylinder sets of $\Lambda^\mathbb{N}$. 
By the Carath\'{e}odory extension theorem, Borel probability measures on $\Lambda^\mathbb{N}$ are in bijection with functions $\tau \colon \Lambda^\ast \to [0,1]$ such that $\tau(\emptyset) = 1$ and $\tau(v) = \sum_{a \in \Lambda} \tau(va)$ for all $v \in \Lambda^\ast$. 
We call such functions \emph{Markov trees}, since the support $|\tau| = \{v \in \Lambda^\ast \colon \tau(v)>0\}$ of such a function is a tree. 
The set of Markov trees is a closed subspace of the compact space $[0,1]^{\Lambda^\ast}$ with metric $d(\tau_1,\tau_2) = \sum_{v \in \Lambda^\ast} q^{-l(v)}|\tau_1(v)-\tau_2(v)|$. 
By abuse of notation we denote it by $\mathcal{P}(\Lambda^\mathbb{N})$, since it is homeomorphic to the space of Borel probability measures on $\Lambda^\mathbb{N}$ with the weak-${}^\ast$ topology. 

The \emph{dimension} of a Markov tree \cite[Definition 7]{Fur1} is
\[\dim \tau = \liminf_{\substack{l(\Pi)\to\infty \\ \Pi \text{ section of } |\tau|}} \frac{-\sum_{v \in \Pi} \tau(v)\log_q \tau(v)}{\sum_{v \in \Pi}l(v)\tau(v)}.\]

Given a subset $V \subset T$ we define its \emph{upper density} 
\[ \overline{d}(V) = \limsup_{N \to \infty} \frac{1}{|T(N)|} \sum_{v \in T(N)} \frac{|V \cap \{w \in T \colon w \le v\}|}{N+1} \]
and its \emph{upper Banach density}
\[d^\ast(V) =  \limsup_{N \to \infty} \sup_{\substack{|\tau| \subset T \\ v \in |\tau|}} \frac{1}{N+1} \sum_{l(w) \le N} \frac{\tau(vw)}{\tau(v)}1_V(vw).\]

\begin{remark}
These definitions specialise to their integer counterparts in the degenerate case $q=1$, justifying the notation. 
The inequality $d^\ast(V) \ge \overline{d}(V)$ also holds for our more general definition. 
To see this, observe that it is enough to construct Markov trees $\pi_N$ supported on $T$ such that
\[\sum_{l(w) \le N} \pi_N(w)1_V(w) = \sum_{v \in T(N)} \frac{|V \cap \{w \in T \colon w \le v\}|}{|T(N)|} = \sum_{l(w) \le N} \frac{|\{v \in T(N) \colon w \le v\}|}{|T(N)|}1_V(w)\]
(the last equality follows from reindexing the sum). 
But the above formula defines such a Markov tree on vertices $w$ with $l(w) \le N$, and we can choose $\pi_N$ to be any consistent extension to the remaining vertices (cf. the proof of Theorem \ref{FW-correspondence}). 
\end{remark}

\begin{eg}\label{Ex.2.4}
If $V = V(E) = \{v \in T \colon l(v) \in E\}$ for $E \subset \mathbb{N}$ and $T$ a tree, then $\overline{d}(V) = \overline{d}(E)$ and $d^\ast(V) = d^\ast(E)$. 
Both equalities follow directly from the definitions. 
For example, for the second equality we observe that for any $\tau$ with $|\tau| \subset T$ and any $v \in |\tau|$ we have 
	\[\frac{1}{N+1}\sum_{l(w) \le N} \frac{\tau(vw)}{\tau(v)}1_V(vw) = \frac{|E \cap \{l(v),\ldots, l(v)+N\}|}{N+1}.\] 
\end{eg}

We use the term \emph{configuration} to refer to a nonempty finite subset $C \subset \Lambda^\ast$ closed under predecessors (a finite tree). 
Terminology and notation defined above for trees are used for configurations as appropriate without comment. 
A configuration $C$ is \emph{nonbranching} if $|C(n)| \le 1$ for all $n \in \mathbb{N}$ and \emph{branching} otherwise. 

By analogy with arithmetic progressions in $\mathbb{N}$, we consider ``affine embeddings'' of $C$ in a tree $T$. 
More precisely, for a vertex $v \in T$ and $n \in \mathbb{N}$ we say $v \in C_n = C_n(T)$ if there exists a map $\iota \colon C \to T$ such that 
\begin{itemize}
\item $\iota(\emptyset)=v$,
\item $\iota(w_1) \le \iota(w_2)$ if $w_1 \le w_2$ ($\iota$ is a map of posets),
\item if $w$ is the longest initial subword common to $w_1$ and $w_2$, then $\iota(w)$ is the longest initial subword common to $\iota(w_1)$ and $\iota(w_2)$ ($\iota$ is infimum-preserving),  
\item $l(\iota(w)) = l(v)+nl(w)$ for all $w \in C$ ($\iota$ is ``affine'').
\end{itemize}
Equivalently, we say the configuration $C$ appears at $v$ (with parameter $n$). 
Observe that trivially every configuration appears at every vertex with parameter $0$. 

We will be concerned with the following configurations (see Figure \ref{config_example}): 
\[F^r = \{\emptyset\} \cup \Lambda_r \cup 0\Lambda_r, \mbox{  } D^{r,k} = \bigcup_{n=0}^k \Lambda_r^n, \mbox{  } V^{r,m,k} = \{v \in T_{m\mathbb{N}}^r \colon l(v) \le k+1\}.\]

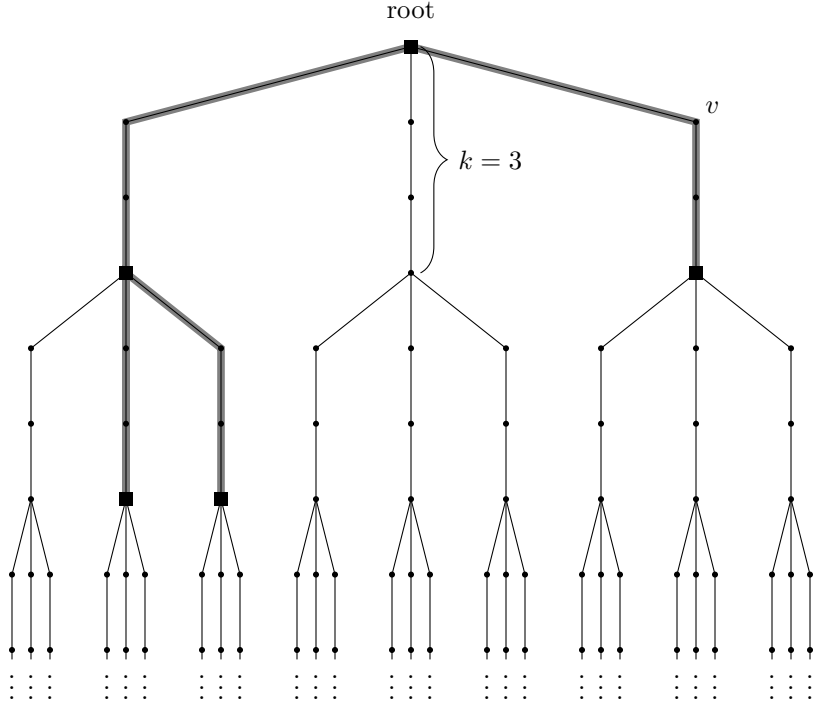
\begin{figure}

\centering
\begin{tikzpicture}[scale=0.25]

\def\dotwidth{0.8pt}
\def\configwidth{2.5pt}

\draw[gray,line width=1mm] (0,20) -- (-15,16) -- (-15,-4) -- (-15,8) -- (-10,4) -- (-10,-4);
\draw[gray,line width=1mm] (0,20) -- (15,16) -- (15,8);

\node[above] at (0,21) {root};

\draw (0,20) -- (0,-12.5);
\draw (0,20) -- (-15,16) -- (-15,-12.5);
\draw (0,20) -- (15,16) -- (15,-12.5);
\draw (-15,8) -- (-20,4) -- (-20,-12.5);
\draw (-15,8) -- (-10,4) -- (-10,-12.5);
\draw (0,8) -- (-5,4) -- (-5,-12.5);
\draw (0,8) -- (5,4) -- (5,-12.5);
\draw (15,8) -- (10,4) -- (10,-12.5);
\draw (15,8) -- (20,4) -- (20,-12.5);
\draw (-20,-4) -- (-21,-8) -- (-21,-12.5);
\draw (-20,-4) -- (-19,-8) -- (-19,-12.5);
\draw (-15,-4) -- (-16,-8) -- (-16,-12.5);
\draw (-15,-4) -- (-14,-8) -- (-14,-12.5);
\draw (-10,-4) -- (-11,-8) -- (-11,-12.5);
\draw (-10,-4) -- (-9,-8) -- (-9,-12.5);
\draw (-5,-4) -- (-6,-8) -- (-6,-12.5);
\draw (-5,-4) -- (-4,-8) -- (-4,-12.5);
\draw (0,-4) -- (-1,-8) -- (-1,-12.5);
\draw (0,-4) -- (1,-8) -- (1,-12.5);
\draw (5,-4) -- (4,-8) -- (4,-12.5);
\draw (5,-4) -- (6,-8) -- (6,-12.5);
\draw (10,-4) -- (9,-8) -- (9,-12.5);
\draw (10,-4) -- (11,-8) -- (11,-12.5);
\draw (15,-4) -- (14,-8) -- (14,-12.5);
\draw (15,-4) -- (16,-8) -- (16,-12.5);
\draw (20,-4) -- (19,-8) -- (19,-12.5);
\draw (20,-4) -- (21,-8) -- (21,-12.5);

\node at (0,20) [circle,fill,inner sep=\dotwidth]{};
\node at (0,16) [circle,fill,inner sep=\dotwidth]{};
\node at (0,12) [circle,fill,inner sep=\dotwidth]{};
\node at (0,8) [circle,fill,inner sep=\dotwidth]{};
\node at (0,4) [circle,fill,inner sep=\dotwidth]{};
\node at (0,0) [circle,fill,inner sep=\dotwidth]{};
\node at (0,-4) [circle,fill,inner sep=\dotwidth]{};
\node at (0,-8) [circle,fill,inner sep=\dotwidth]{};
\node at (0,-12) [circle,fill,inner sep=\dotwidth]{};
\node at (-15,16) [circle,fill,inner sep=\dotwidth]{};
\node at (-15,12) [circle,fill,inner sep=\dotwidth]{};
\node at (-15,8) [circle,fill,inner sep=\dotwidth]{};
\node at (-15,4) [circle,fill,inner sep=\dotwidth]{};
\node at (-15,0) [circle,fill,inner sep=\dotwidth]{};
\node at (-15,-4) [circle,fill,inner sep=\dotwidth]{};
\node at (-15,-8) [circle,fill,inner sep=\dotwidth]{};
\node at (-15,-12) [circle,fill,inner sep=\dotwidth]{};
\node at (15,16) [circle,fill,inner sep=\dotwidth]{};
\node at (15,12) [circle,fill,inner sep=\dotwidth]{};
\node at (15,8) [circle,fill,inner sep=\dotwidth]{};
\node at (15,4) [circle,fill,inner sep=\dotwidth]{};
\node at (15,0) [circle,fill,inner sep=\dotwidth]{};
\node at (15,-4) [circle,fill,inner sep=\dotwidth]{};
\node at (15,-8) [circle,fill,inner sep=\dotwidth]{};
\node at (15,-12) [circle,fill,inner sep=\dotwidth]{};
\node at (-5,4) [circle,fill,inner sep=\dotwidth]{};
\node at (-5,0) [circle,fill,inner sep=\dotwidth]{};
\node at (-5,-4) [circle,fill,inner sep=\dotwidth]{};
\node at (-5,-8) [circle,fill,inner sep=\dotwidth]{};
\node at (-5,-12) [circle,fill,inner sep=\dotwidth]{};
\node at (5,4) [circle,fill,inner sep=\dotwidth]{};
\node at (5,0) [circle,fill,inner sep=\dotwidth]{};
\node at (5,-4) [circle,fill,inner sep=\dotwidth]{};
\node at (5,-8) [circle,fill,inner sep=\dotwidth]{};
\node at (5,-12) [circle,fill,inner sep=\dotwidth]{};
\node at (20,4) [circle,fill,inner sep=\dotwidth]{};
\node at (20,0) [circle,fill,inner sep=\dotwidth]{};
\node at (20,-4) [circle,fill,inner sep=\dotwidth]{};
\node at (20,-8) [circle,fill,inner sep=\dotwidth]{};
\node at (20,-12) [circle,fill,inner sep=\dotwidth]{};
\node at (-20,4) [circle,fill,inner sep=\dotwidth]{};
\node at (-20,0) [circle,fill,inner sep=\dotwidth]{};
\node at (-20,-4) [circle,fill,inner sep=\dotwidth]{};
\node at (-20,-8) [circle,fill,inner sep=\dotwidth]{};
\node at (-20,-12) [circle,fill,inner sep=\dotwidth]{};
\node at (10,4) [circle,fill,inner sep=\dotwidth]{};
\node at (10,0) [circle,fill,inner sep=\dotwidth]{};
\node at (10,-4) [circle,fill,inner sep=\dotwidth]{};
\node at (10,-8) [circle,fill,inner sep=\dotwidth]{};
\node at (10,-12) [circle,fill,inner sep=\dotwidth]{};
\node at (-10,4) [circle,fill,inner sep=\dotwidth]{};
\node at (-10,0) [circle,fill,inner sep=\dotwidth]{};
\node at (-10,-4) [circle,fill,inner sep=\dotwidth]{};
\node at (-10,-8) [circle,fill,inner sep=\dotwidth]{};
\node at (-10,-12) [circle,fill,inner sep=\dotwidth]{};

\node at (-21,-8) [circle,fill,inner sep=\dotwidth]{};
\node at (-21,-12) [circle,fill,inner sep=\dotwidth]{};
\node at (-19,-8) [circle,fill,inner sep=\dotwidth]{};
\node at (-19,-12) [circle,fill,inner sep=\dotwidth]{};
\node at (-16,-8) [circle,fill,inner sep=\dotwidth]{};
\node at (-16,-12) [circle,fill,inner sep=\dotwidth]{};
\node at (-14,-8) [circle,fill,inner sep=\dotwidth]{};
\node at (-14,-12) [circle,fill,inner sep=\dotwidth]{};
\node at (-11,-8) [circle,fill,inner sep=\dotwidth]{};
\node at (-11,-12) [circle,fill,inner sep=\dotwidth]{};
\node at (-9,-8) [circle,fill,inner sep=\dotwidth]{};
\node at (-9,-12) [circle,fill,inner sep=\dotwidth]{};
\node at (-6,-8) [circle,fill,inner sep=\dotwidth]{};
\node at (-6,-12) [circle,fill,inner sep=\dotwidth]{};
\node at (-4,-8) [circle,fill,inner sep=\dotwidth]{};
\node at (-4,-12) [circle,fill,inner sep=\dotwidth]{};
\node at (-1,-8) [circle,fill,inner sep=\dotwidth]{};
\node at (-1,-12) [circle,fill,inner sep=\dotwidth]{};

\node at (21,-8) [circle,fill,inner sep=\dotwidth]{};
\node at (21,-12) [circle,fill,inner sep=\dotwidth]{};
\node at (19,-8) [circle,fill,inner sep=\dotwidth]{};
\node at (19,-12) [circle,fill,inner sep=\dotwidth]{};
\node at (16,-8) [circle,fill,inner sep=\dotwidth]{};
\node at (16,-12) [circle,fill,inner sep=\dotwidth]{};
\node at (14,-8) [circle,fill,inner sep=\dotwidth]{};
\node at (14,-12) [circle,fill,inner sep=\dotwidth]{};
\node at (11,-8) [circle,fill,inner sep=\dotwidth]{};
\node at (11,-12) [circle,fill,inner sep=\dotwidth]{};
\node at (9,-8) [circle,fill,inner sep=\dotwidth]{};
\node at (9,-12) [circle,fill,inner sep=\dotwidth]{};
\node at (6,-8) [circle,fill,inner sep=\dotwidth]{};
\node at (6,-12) [circle,fill,inner sep=\dotwidth]{};
\node at (4,-8) [circle,fill,inner sep=\dotwidth]{};
\node at (4,-12) [circle,fill,inner sep=\dotwidth]{};
\node at (1,-8) [circle,fill,inner sep=\dotwidth]{};
\node at (1,-12) [circle,fill,inner sep=\dotwidth]{};

\foreach \i in {-21,-20,-19,-16,-15,-14,-11,-10,-9,-6,-5,-4,-1,0,1,4,5,6,9,10,11,14,15,16,19,20,21}
{\node[below] at (\i,-12) {$\vdots$};};

\node at (0,20) [rectangle, fill,inner sep=\configwidth]{};
\node at (-15,8) [rectangle,fill,inner sep=\configwidth]{};
\node at (15,8) [rectangle,fill,inner sep=\configwidth]{};
\node at (-15,-4) [rectangle,fill,inner sep=\configwidth]{};
\node at (-10,-4) [rectangle,fill,inner sep=\configwidth]{};

\draw[decorate, decoration={brace,amplitude=10pt}] (0.5,20) -- (0.5,8);
\node[right] at (2,14) {$m=3$};

\node[above right] at (15,16) {$v$};

\end{tikzpicture}

\caption{The configuration $F^2$ appears at the root of $T^2_{3\mathbb{N}}$ with parameter $n=3$, while $v \notin F^2_n$ for any $n \ge 1$.}
\label{config_example}
\end{figure}

For every configuration $C$ and tree $T$ we define the sets of generic parameters 
\[\overline{G}(C) =\overline{G}(C,T) = \{n \in \mathbb{N} \colon \overline{d}(C_n)>0\},\]
\[G^\ast(C) = G^\ast(C,T) = \{n \in \mathbb{N} \colon d^\ast(C_n)>0\}.\]

\begin{remark}\label{Rmk.2.5}
Notice that $F^r$ appears at $v \in T_E^r$ with parameter $n$ if and only if $D^{r,2}$ appears at $v$ with parameter $n$ if and only if $l(v), l(v)+n \in E$. 
Therefore $\overline{G}(F^r,T^r_E) = \overline{G}(D^{r,2},T^r_E) = \overline{\Delta_0}(E)$ and $G^\ast(F^r,T^r_E)= G^\ast(D^{r,2},T^r_E) = \Delta^{\ast}_0(E)$ by Example \ref{Ex.2.4}.
	This is why the generic parameters of $F^r$ and $D^{r,2}$ are analogues of popular differences for trees. 
\end{remark}

\subsubsection{Equality in Theorems \ref{Theorem4.1} and  \ref{direct-D-upper}} 
\label{equality_case}
The tree $T^r_{m\mathbb{N}}$ achieves equality in Theorems \ref{Theorem4.1} and \ref{direct-D-upper}. 
Indeed, by Example \ref{Ex.2.2} 
\[ \overline{\dim}_M T_{m\mathbb{N}}^r =  \frac{1}{m} + \log_q (r-1) \left(1 - \frac{1}{m}\right). \]
The self-similarity of $T_{m\mathbb{N}}^r$ implies that $\dim T_{m\mathbb{N}}^r = \overline{\dim}_M T_{m\mathbb{N}}^r$. 
Also, by Remark \ref{Rmk.2.5} it follows that
\[ \overline{G}(F^r, T^r_{m\mathbb{N}}) = G^\ast(F^r,T_{m\mathbb{N}}^r) = m \mathbb{N} \quad \text{and} \quad \underline{d}(\overline{G}(F^r,T^r_{m\mathbb{N}})) = \underline{d}(G^\ast(F^r,T_{m\mathbb{N}}^r)) = m^{-1}. \]
Hence
\[ \underline{d}(\overline{G}(F^r,T_{m\mathbb{N}}^r)) = \frac{\overline{\dim}_M T_{m\mathbb{N}}^r - \log_q (r-1)}{1 - \log_q(r-1)}, \]
and 
\[ \underline{d}(G^\ast(F^r,T_{m\mathbb{N}}^r) ) = \frac{\dim T_{m\mathbb{N}}^r - \log_q (r-1)}{1 - \log_q(r-1)}. \]

\subsubsection{Sharpness of Theorem \ref{second_inverse_thm_trees}}
\label{sharpness}
Next, we modify the construction of $T_{m\mathbb{N}}^r$ to obtain for every $\eps > 0$ a tree $T_{\eps}$ with
\[
	0 < \underline{d}(G^\ast(F^r,T_{\eps}) ) = \overline{d}(G^\ast(D^{r,2}, T_\eps)) < (1+\eps) \frac{\dim T_{\eps} - \log_q (r-1)}{1 - \log_q(r-1)}
\] 
such that there exists $k \ge 1$ with $V^{r,m,k}_1$ not appearing in $T_{\eps}$. 

Let $T_{\eps} = T_E^r$, where $E = m \mathbb{N} \setminus mM \mathbb{N}$ for some positive integer $M > 1+\eps^{-1}$. 
Then 
\[ \overline{d}(E) = \frac{1}{m}\left(1-\frac{1}{M}\right)\]
and $V^{r,m,mM+1}_1$ does not appear in $T_{\eps}$. 
By the self-similarity of $T_{\eps}$ and Example \ref{Ex.2.2}, we have
\[
\dim T_\eps = \overline{\dim}_M T_{\eps} =  \frac{1}{m}\left(1-\frac{1}{M}\right)+ \log_q (r-1) \left(1 - \frac{1}{m}\left(1-\frac{1}{M}\right)\right)
\]
and hence
\[\frac{\dim T_\eps - \log_q(r-1)}{1-\log_q(r-1)} = \frac{1}{m} \left(1-\frac{1}{M} \right).\]
Since $\Delta_0^\ast(E) = m\mathbb{N}$, observe that by Remark \ref{Rmk.2.5} we have $\underline{d}(G^\ast(F^r, T_\eps)) = \overline{d}(G^\ast(D^{r,2}, T_\eps)) = m^{-1}$. 
Therefore 
\[ \underline{d}(G^\ast(F^r, T_{\eps})) = \overline{d}(G^\ast(D^{r,2},T_\eps)) = \frac{1}{1 - \frac{1}{M}} \cdot \frac{\dim T_{\eps} - \log_q (r-1)}{1 - \log_q(r-1)} < (1+ \eps) \frac{\dim T_{\eps} - \log_q (r-1)}{1 - \log_q(r-1)}.
\]

\subsection*{Dynamical setup}

Given a Markov tree $\tau$ and $v \in |\tau|$, define the Markov tree $\tau^v$ by $\tau^v(w) = \tau(vw)/\tau(v)$ for every $w \in \Lambda^\ast$. 
Using this we define a Markov process $p \colon M \to \mathcal{P}(M)$ on the space $M = \Lambda \times \mathcal{P}(\Lambda^\mathbb{N})$ by $p(a,\tau) = \sum_{i \in \Lambda} \tau(i) \delta_{(i,\tau^i)} \in \mathcal{P}(M)$.
Here $a \in \Lambda$ can be interpreted as labelling the root of $\tau \in \mathcal{P}(\Lambda^\mathbb{N})$ with information about the past under the dynamics $\tau \mapsto \tau^a$. 
Since $p$ is continuous, it induces a Markov operator $P$ on $C(M)$ (a positive contraction satisfying $P1=1$) defined by the formula $Pf(a, \tau) = \sum_{i \in \Lambda} \tau(i)f(i, \tau^i)$.
The pair $(M,p)$ is a \emph{CP-process}. 

\begin{remark}
For simplicity of notation, frequently we will denote a labelled Markov tree by its underlying Markov tree.
Similarly, we write $p_\tau = p(a,\tau)$ since the latter is independent of $a$. 
Further, a labelled Markov tree denoted by $\tau^a$ is always assumed to have label $a$. 
\end{remark}

By a \emph{distribution} we mean a Borel probability measure. 
A distribution $\nu$ on $M$ is \emph{stationary} for $(M,p)$ if $\int_M Pf \,d\nu = \int_M f\,d\nu$ for all continuous $f$. 
Note that if $\nu$ is stationary, then the above formula for $P$ extends to a well-defined operator on $L^p(M,\nu)$ for $1 \le p \le \infty$, and by Jensen's inequality this extension is a Markov operator. 

For $i \in \Lambda$, define the set $B_i = \{(a,\tau) \in M \colon a = i\}$ of Markov trees labelled by~$i$. 
The sets $B_i$ are clopen and partition $M$. 
Define also for $2 \le r \le q$ the set $A_r = \{\tau \in M \colon |\{i \colon p_\tau(B_i)>0\}| \ge r\}$ of Markov trees $\tau$ such that there are at least $r$ vertices in $|\tau|(1)$. 
Observe that $A_r$ is open and dense in $M$, and hence is not closed for $r>1$. 

Define on $M$ the \emph{information function} 
\[H(\tau) = -\sum_{i \in \Lambda} p_\tau(B_i) \log_q p_\tau(B_i) = -\sum_{i \in \Lambda} \tau(i)\log_q \tau(i),\] where by convention $0 \log_q 0 = 0$. 
The \emph{entropy} of a stationary distribution $\nu$ is then $H(\nu) = \int_M H\,d\nu$. 
Note that $0 \le H(\tau) \le \log_q \left| |\tau|(1)\right|$. 

\begin{prop}\label{a-entropy}
If $\nu$ is a stationary distibution for $(M,p)$, then
\[\nu(A_r) \ge \frac{H(\nu)-\log_q(r-1)}{1-\log_q(r-1)}.\]
\end{prop}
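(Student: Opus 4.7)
The plan is to prove this by exploiting the standard fact that the entropy of a probability distribution supported on at most $k$ points is at most $\log_q k$, and then integrating against $\nu$. Stationarity will not actually be needed; the inequality holds for any Borel probability measure on $M$ for which $H(\nu)$ is defined.

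First I would establish the pointwise bound
\[H(\tau) \le \log_q(r-1) + \bigl(1 - \log_q(r-1)\bigr)\mathbf{1}_{A_r}(\tau)\]
for every $\tau \in M$. To see this, split into cases. If $\tau \notin A_r$, then by definition of $A_r$ there are strictly fewer than $r$ indices $i \in \Lambda$ with $\tau(i) > 0$, so $\tau$ is supported on at most $r-1$ points in $\Lambda$. The maximum of $-\sum_i p_i \log_q p_i$ over probability vectors supported on at most $r-1$ points is $\log_q(r-1)$, attained at the uniform distribution, so $H(\tau) \le \log_q(r-1)$ in this case. If instead $\tau \in A_r$, I use only the trivial bound $H(\tau) \le \log_q q = 1$. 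Combining the two cases yields the displayed inequality.

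Next I would integrate this pointwise bound against $\nu$. Since $\int_M \mathbf{1}_{A_r}\,d\nu = \nu(A_r)$, this gives
\[H(\nu) = \int_M H\,d\nu \le \log_q(r-1) + \bigl(1 - \log_q(r-1)\bigr)\nu(A_r).\]
Solving for $\nu(A_r)$ and noting that $1 - \log_q(r-1) > 0$ (since $r \le q$ implies $r - 1 < q$), one obtains
\[\nu(A_r) \ge \frac{H(\nu) - \log_q(r-1)}{1 - \log_q(r-1)},\]
which is the claimed bound.

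There is no real obstacle in this argument: it is essentially a two-line consequence of the concavity/support bound for Shannon entropy. The only subtlety worth a remark is that $A_r$ is not closed in $M$, but this causes no difficulty since $A_r$ is Borel (in fact open) and $\mathbf{1}_{A_r}$ is $\nu$-integrable with value $\nu(A_r)$.
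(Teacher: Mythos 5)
Your proof is correct and is essentially the same as the paper's: both split $M$ into $A_r$ and its complement, bound $H$ by $1$ on $A_r$ and by $\log_q(r-1)$ off $A_r$ (using that a distribution supported on at most $r-1$ symbols has entropy at most $\log_q(r-1)$), integrate, and rearrange. Your observation that stationarity is not needed is accurate.
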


\begin{proof}
Using the above bounds on $H(\tau)$ and the definition of $A_r$, 
\[H(\nu) = \int_{A_r} H\,d\nu + \int_{M \setminus A_r} H\,d\nu \le \nu(A_r)+(1-\nu(A_r))\log_q(r-1).\]
Rearranging gives the proposition. 
\end{proof}

\subsection*{Endomorphic extension}

It will be necessary to work with an extension of the CP-process $(M,p)$, following \cite{FW}. 

Let $\widetilde{M} = \{\widetilde{\tau}=(\tau_i)_{i \le 0} \in M^{\mathbb{Z}^{\le 0}} \colon p_{\tau_i}(\{\tau_{i+1}\})>0 \text{ for all }i<0\}$. 
By abuse of notation we denote by $p$ the natural lift of $p \colon M \to \mathcal{P}(M)$ to a continuous function $\widetilde{M} \to \mathcal{P}(\widetilde{M})$. 
Explicitly, $p_{\tilde{\tau}} = \sum_{a \in \Lambda} \tau_0(a) \delta_{\tilde{\tau}^{a}}$, where $(\tilde{\tau}^a)_i =  \tau_{i+1}$ for $i < 0$ and $(\tilde{\tau}^a)_0 = \tau_0^a$.
We also denote by $P$ the corresponding Markov operator on $C(\widetilde{M})$. 
The pair $(\widetilde{M},p)$ is said to be an \emph{endomorphic extension} of $(M,p)$. 

A stationary distribution $\nu$ on $M$ induces a stationary distribution $\widetilde{\nu}$ on $\widetilde{M}$, and by construction $\widetilde{\nu}$ is invariant under the right shift $S \colon (\tau_i)_{i \le 0} \mapsto (\tau_{i-1})_{i \le 0}$ \cite[Definition 6.3, Remark 6.4, and Lemma 6.8]{Ho}.
The Koopman operator of $S$ therefore acts on $\mathscr{H} = L^2(\widetilde{M},\mathscr{B},\widetilde{\nu})$, where $\mathscr{B}$ is the Borel $\sigma$-algebra on $\widetilde{M}$. 
Since $p_{\widetilde{\tau}}(\{\widetilde{\omega}\})>0$ implies $S(\widetilde{\omega})=\widetilde{\tau}$, a straightforward calculation gives 
\begin{lemma}\label{lem1}
For any $f,g \in \mathscr{H}$ we have $P(f Sg) = g Pf$. \qed
\end{lemma}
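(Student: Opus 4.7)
\medskip

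\noindent\textbf{Proof plan.} The content of the lemma is essentially tautological given the setup: the hint that ``$p_{\widetilde{\tau}}(\{\widetilde{\omega}\})>0$ implies $S(\widetilde{\omega})=\widetilde{\tau}$'' already contains the whole idea. The plan is to first verify the identity pointwise on continuous functions by unfolding the definition of $P$, and then extend to $L^2$ by a density argument.

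First I would record the key structural observation about the extension $(\widetilde{M},p)$. By the very definition of $\widetilde{M}$ and the lift of $p$, for each $\widetilde{\tau}=(\tau_i)_{i\le 0}$ the probability measure $p_{\widetilde{\tau}}$ is a finite convex combination of point masses $\delta_{\widetilde{\omega}}$, where $\widetilde{\omega}=(\omega_i)_{i\le 0}$ satisfies $\omega_{i-1}=\tau_i$ for all $i\le 0$ and $\omega_0$ is a successor of $\tau_0$ in $M$. In particular, $\mathrm{supp}(p_{\widetilde{\tau}})\subset S^{-1}(\widetilde{\tau})$. Equivalently, the identity $Sg(\widetilde{\omega})=g(\widetilde{\tau})$ holds for $p_{\widetilde{\tau}}$-almost every $\widetilde{\omega}$.

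Next, for continuous bounded $f,g\in C(\widetilde{M})$ and any $\widetilde{\tau}\in\widetilde{M}$, I would compute directly:
\[
P(f\cdot Sg)(\widetilde{\tau})=\int_{\widetilde{M}} f(\widetilde{\omega})\,g(S\widetilde{\omega})\,dp_{\widetilde{\tau}}(\widetilde{\omega})=g(\widetilde{\tau})\int_{\widetilde{M}} f(\widetilde{\omega})\,dp_{\widetilde{\tau}}(\widetilde{\omega})=g(\widetilde{\tau})\,Pf(\widetilde{\tau}),
\]
where the middle equality uses the observation above to replace $g(S\widetilde{\omega})$ by the constant $g(\widetilde{\tau})$ on the support of $p_{\widetilde{\tau}}$. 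This establishes the pointwise (hence $\widetilde{\nu}$-almost sure) identity on $C(\widetilde{M})$.

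Finally, I would extend the identity to all of $\mathscr{H}=L^2(\widetilde{M},\mathscr{B},\widetilde{\nu})$. Since $\widetilde{\nu}$ is $S$-invariant, the Koopman operator $g\mapsto Sg$ is an isometry of $\mathscr{H}$, and $P$ extends to a contraction on $L^p(\widetilde{M},\widetilde{\nu})$ for $1\le p\le\infty$ by Jensen's inequality. For $f,g$ bounded it suffices to approximate in $L^2$ by continuous functions (using density of $C(\widetilde{M})$ in $\mathscr{H}$, as $\widetilde{M}$ is a compact metric space) and pass to the limit on both sides. For general $f,g\in\mathscr{H}$ the products $f\cdot Sg$ and $g\cdot Pf$ need not lie in $\mathscr{H}$, but the identity still makes sense as an equality of measurable functions and follows by truncation $f_N=f\cdot\mathbf{1}_{|f|\le N}$, $g_N=g\cdot\mathbf{1}_{|g|\le N}$ and monotone/dominated convergence. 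The main (and only) obstacle is this $L^2$-extension bookkeeping, which is entirely routine once the pointwise identity is in hand.
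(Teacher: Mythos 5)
Your proof is correct and is exactly the argument the paper intends: the paper justifies the lemma by the same observation that $p_{\widetilde{\tau}}$ is supported on $S^{-1}(\{\widetilde{\tau}\})$, so $Sg$ is constant equal to $g(\widetilde{\tau})$ on that support, and calls the rest a straightforward calculation. The $L^2$ bookkeeping you add at the end is harmless but not really needed, since $p_{\widetilde{\tau}}$ is a finite atomic measure and the defining formula for $P$ makes the identity hold pointwise $\widetilde{\nu}$-a.e.\ for arbitrary measurable $f,g$.
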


Integrating with respect to $\widetilde{\nu}$ shows that $P$ and $S$ are adjoint operators on $\mathscr{H}$, and taking $f=1$ gives the formula $PS=I$. 
It follows that $S^nP^n$ is the orthogonal projection from $\mathscr{H}$ onto the closed subspace $S^n\mathscr{H} = L^2(\widetilde{M},S^{-n}\mathscr{B},\nu)$. 

If $f =Sf' \in S\mathscr{H}$ then $SPf=SPSf'=Sf'=f$, so $SP=I$ on $S\mathscr{H}$. 
Define $\mathscr{H}_\infty = \cap_{n \ge 1} S^n\mathscr{H} = L^2(\widetilde{M}, \mathscr{B}_\infty,\nu)$, where $\mathscr{B}_\infty = \cap_{n \ge 1} S^{-n}\mathscr{B}$. 
For $f \in \mathscr{H}_\infty$ we have $Sf \in \mathscr{H}_\infty$ and $Pf \in \mathscr{H}_\infty$ (using $PS=I$), giving
 
\begin{lemma}\label{lemma3}
$P$ and $S$ restrict to mutually inverse operators $\mathscr{H}_\infty \to \mathscr{H}_\infty$. \qed
\end{lemma}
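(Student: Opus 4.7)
The plan is to combine three facts that are already available in the discussion preceding the statement: the identity $PS=I$ holds on all of $\mathscr{H}$ (obtained from Lemma~\ref{lem1} with $f=1$, using $P1=1$), the identity $SP=I$ holds on $S\mathscr{H}$ (shown just above), and the containment $\mathscr{H}_\infty \subseteq S\mathscr{H}$ (immediate from the definition). Once one verifies that $\mathscr{H}_\infty$ is invariant under both $P$ and $S$, these two identities restrict to $\mathscr{H}_\infty$ and exhibit $P$ and $S$ as mutual inverses there. So I would split the argument into invariance and invertibility.

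For invariance under $S$: if $f \in S^n\mathscr{H}$ for every $n \ge 1$, then $Sf \in S^{n+1}\mathscr{H} \subseteq S^n\mathscr{H}$ for each such $n$, whence $Sf \in \mathscr{H}_\infty$. For invariance under $P$, fix $n \ge 1$ and, using $\mathscr{H}_\infty \subseteq S^{n+1}\mathscr{H}$, write $f = S^{n+1}g$ for some $g \in \mathscr{H}$. Applying $PS=I$ once gives
\[
Pf \;=\; PS^{n+1}g \;=\; (PS)\,S^{n}g \;=\; S^{n}g \;\in\; S^{n}\mathscr{H},
\]
and since $n \ge 1$ was arbitrary, $Pf \in \mathscr{H}_\infty$. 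Invertibility is then immediate: $PS=I$ on $\mathscr{H}_\infty$ because it already holds on $\mathscr{H}$, and $SP=I$ on $\mathscr{H}_\infty$ because $\mathscr{H}_\infty \subseteq S\mathscr{H}$ and $SP=I$ is known there.

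I do not anticipate any genuine obstacle; the content of the lemma is essentially bookkeeping, extracting a two-sided inverse from the one-sided relation $PS=I$ by restricting both operators to the common range $\mathscr{H}_\infty$ of all the iterates of $S$. The only mildly subtle point is the passage from $f=S^{n+1}g$ to $Pf=S^n g$, which relies on applying the global identity $PS=I$ inside a power of $S$, and this is clean because $PS^{n+1} = (PS)S^n = S^n$ makes sense on all of $\mathscr{H}$.
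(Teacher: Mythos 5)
Your proposal is correct and follows essentially the same route as the paper: the paper likewise establishes $PS=I$ on $\mathscr{H}$ and $SP=I$ on $S\mathscr{H}$, notes that $\mathscr{H}_\infty$ is invariant under $S$ and (using $PS=I$) under $P$, and concludes by restriction. You have simply filled in the invariance verifications that the paper leaves implicit.
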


Denote the orthogonal projection of $f \in \mathscr{H}$ onto $\mathscr{H}_\infty$ by $\overline{f}$. 

\begin{prop}\label{prop_4}
For $f \in \mathscr{H}$, $\|P^nf - P^n\overline{f}\|_2 \to 0$. 
\end{prop}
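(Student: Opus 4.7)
The plan is to reduce to showing $\|P^n g\|_2 \to 0$ for $g = f - \overline{f}$, and then exploit the fact, established just before the proposition, that $S^n P^n$ is the orthogonal projection onto the closed subspace $S^n \mathscr{H} = L^2(\widetilde{M}, S^{-n}\mathscr{B}, \widetilde{\nu})$. The key observation is the identity
\[
\|P^n g\|_2^2 = \langle P^n g, P^n g \rangle = \langle g, S^n P^n g \rangle,
\]
which uses repeatedly that $S$ is the adjoint of $P$ on $\mathscr{H}$. Since $S^n P^n$ is self-adjoint and idempotent, this equals $\|S^n P^n g\|_2^2$.

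Next I would invoke a standard fact about orthogonal projections: if $(V_n)_{n \ge 1}$ is a decreasing sequence of closed subspaces of a Hilbert space with intersection $V_\infty$, then $\mathrm{Proj}_{V_n} \to \mathrm{Proj}_{V_\infty}$ in the strong operator topology. Applying this to $V_n = S^n \mathscr{H}$ (which is indeed decreasing since $S\mathscr{H} \subset \mathscr{H}$ implies $S^{n+1}\mathscr{H} \subset S^n \mathscr{H}$) and $V_\infty = \mathscr{H}_\infty$, I get $S^n P^n g \to \mathrm{Proj}_{\mathscr{H}_\infty} g$ in norm.

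Finally, since $\overline{f}$ is by definition the projection of $f$ onto $\mathscr{H}_\infty$ and lies in $\mathscr{H}_\infty$, the vector $g = f - \overline{f}$ is orthogonal to $\mathscr{H}_\infty$, so $\mathrm{Proj}_{\mathscr{H}_\infty} g = 0$. Combining with the previous paragraph gives $\|P^n g\|_2 = \|S^n P^n g\|_2 \to 0$, which is the desired conclusion.

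There is no real obstacle here; the proposition is essentially a restatement of the fact that the tail $\sigma$-algebra $\mathscr{B}_\infty$ records exactly the $L^2$-asymptotics of $P^n$, and the proof consists of assembling the adjoint relation $PS = I$, the projection identity for $S^n P^n$ (both already established in the excerpt), and the standard monotone convergence of orthogonal projections onto a nested intersection.
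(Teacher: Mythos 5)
Your proof is correct and takes essentially the same route as the paper: both arguments reduce $\|P^nf-P^n\overline f\|_2$ to $\|S^nP^nf-\overline f\|_2$ and then invoke the strong convergence of the projections onto the decreasing subspaces $S^n\mathscr{H}$ to the projection onto $\mathscr{H}_\infty$ (the paper phrases this as $L^2$ convergence of the conditional expectations $E(f\mid S^{-n}\mathscr{B})$, which is the same fact).
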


\begin{proof}
As $\widetilde{\nu}$ is $S$-invariant, it follows from Lemma \ref{lemma3} that
\[\|P^nf - P^n\overline{f}\|_2 = \|S^nP^nf-S^nP^n\overline{f}\|_2 =\|S^nP^nf - \overline{f}\|_2 \to 0\]
since $\|E(f \mid S^{-n}\mathscr{B} ) - E(f \mid \cap_{i \ge 1} S^{-i}\mathscr{B})\|_2 \to 0$ \cite[Theorem 5.8]{EW}. 
\end{proof}

By composing $H$ with the projection $\widetilde{M}\to M$ onto the $0$-th coordinate, the information function $H$ is defined on $\widetilde{M}$, and hence the entropy of a stationary distribution for $(\widetilde{M},p)$ is defined as for $(M,p)$. 

\section{The Furstenberg--Weiss correspondence principle}
\label{Correspondence_principle}

In \cite{FW} Furstenberg and Weiss associated to a tree of positive upper Minkowski dimension a stationary distribution for the CP-process $(M,p)$, and showed that the appearance of $D^{2,k}_n$ could be deduced from the positivity of quantities defined on the dynamical system. 
In this section we extend their construction to arbitrary configurations, and prove an analogous correspondence principle based on \cite{Fur1} for trees of positive Hausdorff dimension. 

\subsection{Construction of configuration-detecting functions}\label{phi-construction}

Given a configuration $C$ and an integer $n \ge 1$, we say that a function $f \colon M \to [0,1]$ is \emph{$C_n$-detecting} if $f(\tau)>0$ if and only if $C$ appears at the root of $|\tau|$ with parameter $n$. 
In preparation for proving correspondence principles we construct recursively several families of configuration-detecting functions. 

We first construct a set of configuration-detecting functions $\varphi_{C_n}$. 
For the simplest configuration $\{\emptyset\}$, we can take $\varphi_{\{\emptyset\}_n}=1$ for all $n \ge 1$. 
Given $I \subset \Lambda$ such that $|I|=|C(1)|$ and a bijection $\beta \colon I \to C(1)$, the positivity of $\prod_{i \in I} P(1_{B_i}P^{n-1}\varphi_{C_n^{\beta(i)}})$ at $\tau \in M$ is equivalent to the appearance of $C$ at the root of $|\tau|$ with parameter $n$ such that $\beta(i) \in C(1)$ is mapped to $iv \in |\tau|$ for some $v \in \Lambda^{n-1}$. 
Summing over all choices of $I$ and $\beta$, we define $\varphi_{C_n}$ by the recursive formula
\[\varphi_{C_n} = \sum_{\substack{I \subset \Lambda \\ |I|=|C(1)|}} \sum_{\beta \colon I \xrightarrow{\sim} C(1)} \prod_{i \in I} P(1_{B_i}P^{n-1}\varphi_{C_n^{\beta(i)}}).\]	

\begin{remark}
Alternatively we could sum over all injections $\gamma:C(1) \to \Lambda$ and define $\varphi_{C_n}$ by
\[
\varphi_{C_n} = \sum_{\gamma} \prod_{i \in C(1)} P(1_{B_{\gamma(i)}}P^{n-1}\varphi_{C_n^{i}}).
\]
\end{remark}

We also have $0 \le \varphi_{C_n} \le 1$. 
Indeed, since $\varphi_{\{\emptyset\}_n}=1$ and $P$ is positive
\[0 \le \varphi_{C_n} \le  \sum_{\substack{I \subset \Lambda \\ |I|=|C(1)|}} \sum_{\beta \colon I \xrightarrow{\sim} C(1)} \prod_{i \in I} P1_{B_i} \le \left(\sum_{i \in \Lambda}P1_{B_i}\right)^{|C(1)|} = 1.\]

Starting instead with $\phi_{D_n^{r,1}} = 1_{A_r}$ and $\phi_{C_n} = 1$ for nonbranching configurations $C$, we can adapt the above recursion to construct an alternative family of configuration-detecting functions $\phi_{C_n} \ge \varphi_{C_n}$ more suitable for computations. 
Let $C(1)' = \{v \in C(1) \colon C^v \text{ is branching}\}$. 
We define $\phi_{C_n}$ recursively by the formula
\[\phi_{C_n} = 1_{A_{|C(1)|}} \sum_{\substack{I \subset \Lambda \\ |I|=|C(1)'|}} \sum_{\beta \colon I \xrightarrow{\sim} C(1)'} \prod_{i \in I} P(1_{B_i}P^{n-1}\phi_{C_n^{\beta(i)}}).\]
Note that $\varphi_{D_n^{r,1}} \le 1_{A_r} = \phi_{D_n^{r,1}}$. 
Similarly we have $0 \le \phi_{C_n} \le 1$. 

As the $B_i$ are clopen and $P$ takes continuous functions to continuous functions, the $\varphi_{C_n}$ are continuous. 
However, the $\phi_{C_n}$ are in general not continuous since $A_r$ is not clopen for $r>1$. 

If $C$ is a configuration such that the configurations $C^v$ are all ``isomorphic'' for $v \in C(1)$, the above recursion can be simplified by omitting the sum over bijections~$\beta$. 
For integers $2 \le r \le q$ and $n \ge 1$, define (nonlinear) operators $R_{r,n}$ on $L^\infty(M)$ by
\[R_{r,n}f = \sum_{\substack{I \subset \Lambda \\ |I|=r}} \prod_{i \in I} P(1_{B_i}P^{n-1}f).\]
If $f$ detects $C^v_n$ for (all) $v \in C(1)$ and $|C(1)|=r$, then $R_{r,n}f$ detects $C_n$. 
Denote by $\phi_{C_n}'$ the $C_n$-detecting function obtained by applying a sequence of the above operators to the appropriate $1_{A_r}$, and observe that $\phi_{C_n}=c\phi_{C_n}'$ for some integer $c \ge 1$. 

\begin{eg}
	For the configuration $F^r$, we have $|C(1)|=r$ and $|C(1)'|=1$. 
There is always a unique bijection $I \to C(1)'$, so linearity of $P$ gives
\[ \phi_{F^r_n} = 1_{A_r} \sum_{i \in \Lambda} P(1_{B_i}P^{n-1}1_{A_r}) = 1_{A_r}P^n1_{A_r}\]
since $1=\sum_{i \in \Lambda}1_{B_i}$. 

	If $C(1)=C(1)'$, the factor $1_{A_{|C(1)|}}$ is redundant in the definition of $\phi_{C_n}$ as the function in the sum is already supported on a subset of $A_{|C(1)|}$. 
For example,
\[\phi_{D^{r,2}_n} =  \sum_{\substack{I \subset [q] \\ |I|=r}} \sum_{\beta \colon I \xrightarrow{\sim} \Lambda_r} \prod_{i \in I} P(1_{B_i}P^{n-1}1_{A_r}) = r! \sum_{\substack{I \subset [q] \\ |I|=r}} \prod_{i \in I} P(1_{B_i}P^{n-1}1_{A_r} ) = r!\phi_{D^{r,2}_n}'.\]
\end{eg}

The following lemma is used in the proofs of the correspondence principles to account for the lack of continuity of $\phi_{C_n}$. 

\begin{lemma}\label{approximation}
If $(\nu_k)_{k \ge 1}$ is a sequence of distributions on $M$ converging to $\nu$ in the weak-${}^\ast$ topology, then for every configuration $C$ and integer $n \ge 1$
\[\limsup_{k \to \infty} \int_M\phi_{C_n}\,d\nu_k \ge \int_M \phi_{C_n}\,d\nu.\]
\end{lemma}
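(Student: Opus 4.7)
The plan is to show that each $\phi_{C_m}$ is a bounded nonnegative lower semicontinuous (LSC) function on $M$. Granted this, the lemma follows from the standard Portmanteau-type theorem: for any bounded nonnegative LSC function $f$ and any weak-${}^\ast$ convergent sequence $\nu_k \to \nu$ of Borel probability measures on $M$, one has $\liminf_k \int f\,d\nu_k \ge \int f\,d\nu$, and since $\limsup_k \ge \liminf_k$ this yields the stated inequality with $f = \phi_{C_m}$.

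I would prove the LSC property by induction on the number of vertices of $C$. The base cases are $\phi_{C_m} \equiv 1$ (continuous) when $C$ is nonbranching, and $\phi_{D^{r,1}_m} = 1_{A_r}$, which is LSC since $A_r$ is open in $M$. For the inductive step I would record three closure properties of bounded nonnegative LSC functions on $M$: stability under finite sums, under finite products, and under the Markov operator $P$. Since the indicators $1_{B_i}$ are in fact continuous (the $B_i$ being clopen) and $1_{A_n}$ is LSC, the recursion defining $\phi_{C_m}$ combines the inductively LSC functions $\phi_{C_m^{\beta(i)}}$ with these ingredients only through the three operations above, so $\phi_{C_m}$ is LSC.

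The only nonroutine point is the preservation of LSC under $P$. Given a bounded nonnegative LSC function $f$ and a convergent sequence $\tau_n \to \tau$ in $M$, I would examine $Pf(\tau_n) = \sum_{i \in \Lambda} \tau_n(i) f(\tau_n^i)$ term by term. For indices $i$ with $\tau(i) = 0$, the $i$-th term is bounded by $\tau_n(i)\|f\|_\infty \to 0 = \tau(i) f(\tau^i)$. For indices $i$ with $\tau(i) > 0$, one has $\tau_n(i) > 0$ eventually, $\tau_n^i \to \tau^i$ in $\mathcal{P}(\Lambda^\bN)$, and the LSC of $f$ together with $\tau_n(i) \to \tau(i)$ gives $\liminf_n \tau_n(i) f(\tau_n^i) \ge \tau(i) f(\tau^i)$. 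Summing over the finitely many $i \in \Lambda$ yields $\liminf_n Pf(\tau_n) \ge Pf(\tau)$, so $Pf$ is LSC. No serious difficulty is anticipated; the argument is essentially a bookkeeping verification that each operation appearing in the recursion for $\phi_{C_m}$ remains within the class of bounded nonnegative LSC functions.
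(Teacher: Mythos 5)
Your proposal is correct, and it reaches the conclusion by a genuinely different (and somewhat cleaner) route than the paper. The paper does not isolate lower semicontinuity as the relevant property; instead it works concretely with the open sets $A_r^\delta=\{\tau\colon |\{i\colon p_\tau(B_i)>\delta\}|\ge r\}$, reruns the recursive construction with $1_{A_r^\delta}$ (and then with Urysohn functions $h_r$ satisfying $1_{A_r^{\delta'}}\le h_r\le 1_{A_r^\delta}$) to produce continuous minorants $h_{C_m}$ of $\phi_{C_m}^\delta$, restricts attention to continuity points of the monotone function $\delta\mapsto\int\phi_{C_m}^\delta\,d\nu$, and finishes with monotone convergence as $\delta\to 0$. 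In effect the paper proves by hand, for these particular functions, the two facts you invoke abstractly: that $\phi_{C_m}$ is an increasing limit of continuous functions, and the Portmanteau inequality for such limits. Your version trades that explicit two-parameter approximation for one new lemma --- that $P$ preserves bounded nonnegative LSC functions --- which you verify correctly: the key points are that $\tau\mapsto\tau(i)$ is continuous, that $\tau_n^i\to\tau^i$ whenever $\tau(i)>0$, that the terms with $\tau(i)=0$ are handled by nonnegativity, and that $\liminf$ is superadditive over the finite sum; together with closure of this class under finite sums and products of nonnegative functions and the openness of $A_r$, the induction on the size of $C$ goes through. What your approach buys is brevity and a reusable structural statement about $P$; what the paper's buys is self-containedness (it never needs the general Portmanteau theorem for LSC functions) and explicit continuous approximants $h_{C_m}$, at the cost of the slightly fiddly bookkeeping with the $\delta_n$ and the continuity points of $\alpha$. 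Both arguments in fact yield the stronger conclusion with $\liminf$ in place of $\limsup$.
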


\begin{proof}
Define for $\delta \in [0,1]$ open sets $A_r^\delta = \{\tau \in M \colon |\{i \colon p_\tau(B_i)>\delta\}|\ge r\} \subset A_r$, and let $\phi_{C_n}^\delta$ be the function obtained by replacing $1_{A_r}$ with $1_{A_r^\delta}$ in the recursive construction of $\phi_{C_n}$. 
Observe that $\delta \le \delta'$ implies $\phi_{C_n}^\delta \ge \phi_{C_n}^{\delta'}$ by the positivity of $P$. 
Then the monotone function $\alpha \colon \delta \mapsto \int_M \phi_{C_n}^\delta\,d\nu$ has countably many discontinuities, so we can choose a sequence $\delta_j \to 0$ such that $\alpha$ is continuous at $\delta_j$ for all $j$. 

We claim $\int_M \phi_{C_n}^\delta\,d\nu_k \to \int_M \phi_{C_n}^\delta\,d\nu = \alpha(\delta)$ if $\alpha$ is continuous at $\delta$. 
If $\delta < \delta'$, the closed sets $(A_r^\delta)^c$ and $\overline{A_r^{\delta'}} = \{\tau \in M \colon |\{i \colon p_\tau(B_i) \ge \delta'\}| \ge r\}$ are disjoint since $\overline{A_r^{\delta'}} \subset A_r^\delta$. 
By Urysohn's lemma there are continuous functions $h_r$ such that $1_{A_r^{\delta'}} \le h_r \le 1_{A_r^\delta}$. 
Defining $h_{C_n}$ to be the function obtained by repeating the construction of $\phi_{C_n}$ with $h_r$ in place of $1_{A_r}$, it follows that $\phi_{C_n}^{\delta'} \le h_{C_n} \le \phi_{C_n}^\delta$. 
Since $h_{C_n}$ is continuous, 
\begin{align*}
\liminf_{k \to \infty} \int_M \phi_{C_n}^\delta \,d\nu_k & \ge \liminf_{k \to \infty} \int_M h_{C_n}\,d\nu_k = \int_M h_{C_n}\,d\nu \ge \int_M \phi_{C_n}^{\delta'}\,d\nu = \alpha(\delta').
\end{align*}
Continuity of $\alpha$ at $\delta$ implies $\liminf_{k \to \infty} \int_M \phi_{C_n}^\delta \,d\nu_k \ge \alpha(\delta)$, and a similar argument with $\delta'<\delta$ proves the claim. 
Hence
\begin{align*}
\limsup_{k \to \infty} \int_M \phi_{C_n} \,d\nu_k &\ge \lim_{k \to \infty} \int_M \phi_{C_n}^{\delta_j}\,d\nu_k = \int_M \phi_{C_n}^{\delta_j}\,d\nu \xrightarrow{j \to \infty} \int_M \phi_{C_n}\,d\nu
\end{align*}
by the monotone convergence theorem.
\end{proof}

\subsection{Correspondence principle for upper density}

\begin{thm}\label{FW-correspondence}
For every tree $T$ with $\overline{\dim}_M T >0$, the CP-process $(M,p)$ has a stationary distribution $\mu$ such that $H(\mu)=\overline{\dim}_M T$,
\begin{equation}
\label{a-mdim}
\mu(A_r) \ge \frac{\overline{\dim}_MT - \log_q(r-1)}{1-\log_q(r-1)},
\end{equation}
and for every configuration $C$ and every integer $n \ge 1$
\begin{equation}
\label{ineq_1}
\overline{d}(C_n) \ge \int_M \phi_{C_n} \,d \mu.
\end{equation}
\end{thm}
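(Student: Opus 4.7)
The plan is a Furstenberg-style correspondence: approximate $T$ by finitely supported Markov trees, associate distributions $\mu_n$ on $M$, and take $\mu$ as a weak-$\ast$ limit. Choose a subsequence $(n_k)$ along which $n_k^{-1}\log_q|T(n_k)| \to \overline{\dim}_M T$. For each $n$, let $\tau_n \in \mathcal{P}(\Lambda^{\mathbb{N}})$ be the Markov tree defined by $\tau_n(v) = |T(n) \cap \{w : v \le w\}|/|T(n)|$ for $v \in T$ with $l(v) \le n$, and extended below level $n$ along a single arbitrary path so that $\tau_n$ becomes a genuine Markov tree on $\Lambda^\ast$. Form the averaged distribution (suppressing labels)
\[
\mu_n = \frac{1}{n+1} \sum_{k=0}^{n} \sum_{v \in |\tau_n|(k)} \tau_n(v)\, \delta_{\tau_n^v} \in \mathcal{P}(M),
\]
and, by compactness of $\mathcal{P}(M)$, pass to a further subsequence so that $\mu_{n_k} \to \mu$ in the weak-$\ast$ topology.

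Stationarity of $\mu$ is a telescoping calculation: for continuous $f$ the identity $p_\tau = \sum_i \tau(i)\delta_{(i,\tau^i)}$ gives
\[
\int Pf\,d\mu_n - \int f\,d\mu_n = \frac{1}{n+1}\Bigl(\sum_{v \in |\tau_n|(n+1)} \tau_n(v)\, f(\tau_n^v) - f(\tau_n)\Bigr) = O(\|f\|_\infty/n).
\]
For entropy, the chain rule $\sum_{v \in |\tau_n|(k)} \tau_n(v)\, H(\tau_n^v) = S_{k+1} - S_k$, with $S_k = -\sum_{v \in |\tau_n|(k)} \tau_n(v)\log_q \tau_n(v)$, telescopes to $\int H\,d\mu_n = S_{n+1}/(n+1)$. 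The choice of $\tau_n$ gives $S_n = \log_q|T(n)|$, and the single-path extension forces $S_{n+1} = S_n$, so $\int H\,d\mu_{n_k} \to \overline{\dim}_M T$. Since $H$ is continuous in $\tau$, $H(\mu) = \overline{\dim}_M T$; inequality~(\ref{a-mdim}) then follows immediately from Proposition~\ref{a-entropy}.

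For~(\ref{ineq_1}), the recursive construction of $\phi_{C_m}$ ensures that $\phi_{C_m}(\tau_n^v) > 0$ forces the configuration $C$ to appear at $v$ in $|\tau_n|$ with parameter $m$. Since $\phi_{C_m} \le 1$ and the single-path extension contains no branching configurations, one has $\phi_{C_m}(\tau_n^v) \le 1_{C_m(T)}(v)$ for every $v$ with $l(v) + mh \le n$, where $h$ is the height of $C$; the $O(h/n)$ fraction of $v$ close to level $n$ contributes negligibly. Switching the order of summation and using $\sum_{v \in |\tau_n|(k)} \tau_n(v)\, 1_{C_m}(v) = |T(n)|^{-1} \sum_{w \in T(n)} 1_{C_m}(w^{(k)})$, where $w^{(k)}$ is the length-$k$ prefix of $w$, yields
\[
\int_M \phi_{C_m}\,d\mu_n \;\le\; \frac{1}{(n+1)|T(n)|}\sum_{w \in T(n)}\bigl|C_m \cap \{u : u \le w\}\bigr| + O(h/n).
\]
Taking $n = n_k \to \infty$, the right-hand side is bounded above by $\overline{d_T}(C_m)$, and Lemma~\ref{approximation} gives $\int \phi_{C_m}\,d\mu \le \limsup_k \int \phi_{C_m}\,d\mu_{n_k}$, closing the argument.

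The main obstacle is that $\phi_{C_m}$ fails to be continuous on $M$ (the sets $A_r$ are open but not closed), so a naive weak-$\ast$ limit provides no bound for $\int \phi_{C_m}\,d\mu$; Lemma~\ref{approximation} was proved precisely to bridge this gap via an upper semicontinuity-type inequality. The only other delicate point is the boundary bookkeeping in $\tau_n$ near level $n$, which is routine because the configuration $C$ and hence $h$ is fixed.
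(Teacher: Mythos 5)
Your proposal is correct and follows essentially the same route as the paper: the same Cesàro averages of $P^k\delta_{\tau_n}$ for the level-$n$ uniform Markov tree, the same telescoping entropy computation reducing $\int H\,d\mu_n$ to the boundary term $\log_q|T(n)|/(n+1)$, Proposition~\ref{a-entropy} for~(\ref{a-mdim}), and the pointwise bound $\phi_{C_m}(\tau_n^v)\le 1_{C_m}(v)$ combined with Lemma~\ref{approximation} for~(\ref{ineq_1}). Your explicit single-path extension below level $n$ and the $O(h/n)$ boundary bookkeeping are minor refinements of the same argument (the paper instead uses two-sided entropy bounds and leaves the extension arbitrary), not a different approach.
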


\begin{proof}

Let $(L_k)_{k\ge 1}$ be an increasing sequence such that 
\[\overline{\dim}_M T = \lim_{k \to \infty} \frac{\log_q |T(L_k+1)|}{L_k+1}.\]
Fix an arbitrary label $a \in \Lambda$, and for each $k \ge 1$ let $\pi_k$ be any Markov tree labelled by $a$ such that $\pi_k(v) = |T(L_k)|^{-1}$ for all $v \in T(L_k)$ (note that this condition determines $\pi_k$ on vertices of level at most $L_k$). 
Then any weak-${}^\ast$ limit of the distributions
\[\mu_k = \frac{1}{L_k+1} \sum_{i=0}^{L_k} P^i\delta_{\pi_k} = \frac{1}{L_k+1} \sum_{l(v)\le L_k} \pi_k(v) \delta_{\pi^v_k}\]
is stationary, and we choose $\mu$ to be such a limit. 

Since $H(x)$ is continuous and $\pi_k(v) = \sum_{a \in \Lambda} \pi_k(va)$, 
\begin{align}\label{entropy_boundary}
H(\mu) &= \lim_{k \to \infty} \int_M H \,d\mu_{k} \nonumber \\ \nonumber
&= -\lim_{k \to \infty} \frac{1}{L_k+1} \sum_{l(v) \le L_k} \pi_k(v) \sum_{a \in \Lambda} \frac{\pi_k(va)}{\pi_k(v)} \log_q \frac{\pi_k(va)}{\pi_k(v)} \\ \nonumber
&= -\lim_{k \to \infty} \frac{1}{L_k+1} \sum_{l(v) \le L_k} \sum_{a \in \Lambda} \pi_k(va)\log_q \pi_k(va) - \pi_k(va)\log_q \pi_k(v) \\
&= -\lim_{k \to \infty} \frac{1}{L_k+1} \sum_{l(v)=L_k} \sum_{a \in \Lambda} \pi_k(va)\log_q \pi_k(va).
\end{align}
Recall that for every $v \in |\pi_k|$ we have the bounds
\begin{equation}\label{entropy_bounds}
-\pi_k(v)\log_q\pi_k(v) \le -\sum_{a \in \Lambda} \pi_k(va)\log_q\pi_k(va) \le -\pi_k(v) \log_q \frac{\pi_k(v)}{q}.
\end{equation}
Since $-\sum_{l(v)=L_k} \pi_k(v) \log_q \pi_k(v) = \log_q|T(L_k)|$ by definition of $\pi_k$, summing the inequalities (\ref{entropy_bounds}) over $v \in L_k$ and noticing $\sum_{l(v)=L_k}\pi_k(v)=1$ gives
\[H(\mu)=\int_X H\,d\mu = \lim_{k \to \infty} \frac{\log_q|T(L_k)|}{L_k+1} = \lim_{k \to \infty} \frac{\log_q|T(L_k+1)|}{L_k+1} = \overline{\dim}_MT.\]
where the third equality follows from the bounds 
\[q^{-1}|T(L_k+1)| \le |T(L_k)| \le |T(L_k+1)|.\] 
Proposition \ref{a-entropy} immediately gives the inequality (\ref{a-mdim}). 

To prove the inequality (\ref{ineq_1}), applying a change of summation variable and using the definitions of $\pi_k$ and $\phi_{C_n}$ gives
\begin{align*}
\overline{d}(C_n) &\ge \limsup_{k \to \infty} \frac{1}{|T(L_k)|}\sum_{v \in T(L_k)} \frac{|C_n \cap \{w \in T \colon w \le v\}|}{L_k+1} \\
&= \limsup_{k \to \infty} \frac{1}{L_k+1} \sum_{l(w) \le L_k} \pi_k(w) 1_{C_n}(w) \\
&\ge \limsup_{k \to \infty} \frac{1}{L_k+1} \sum_{l(w) \le L_k} \pi_k(w) \phi_{C_n}(\pi_k^w) \\
&= \limsup_{k \to \infty} \int_M \phi_{C_n}\,d\mu_k. 
\end{align*}
The conclusion follows from Lemma \ref{approximation}. 
\end{proof}

\subsection{Correspondence principle for upper Banach density}

\begin{thm}\label{ergodic-correspondence}
If $\dim T >0$, for every $\epsilon >0$ there exists an ergodic\footnote{Ergodicity here means an extremal point in the compact convex subset of all stationary distributions.} stationary distribution $\eta = \eta_\epsilon$ for the CP-process $(M,p)$ such that $H(\eta) \ge \dim T-\epsilon$,
\begin{equation}
\label{a-hdim}
\eta(A_r) \ge \frac{\dim T-\epsilon-\log_q(r-1)}{1-\log_q(r-1)}, 
\end{equation}
and for every configuration $C$ and integer $n \ge 1$
\begin{equation}\label{ergodic_correspondence_ineq}
d^\ast(C_n) \ge \int_M \phi_{C_n}\,d\eta. 
\end{equation}
\end{thm}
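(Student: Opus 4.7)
The plan is to mirror the proof of Theorem \ref{FW-correspondence}, with two modifications: (a) replace level-averaging by section-averaging so that the entropy formula captures Hausdorff rather than Minkowski dimension, and (b) extract an ergodic component via the endomorphic extension $\widetilde{M}$.

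First, I would invoke a Frostman-type lemma for trees (in the spirit of \cite{Fur1}) to obtain a Markov tree $\tau$ with $|\tau| \subset T$ and $\dim \tau > \dim T - \epsilon/2$. Using the definition of $\dim \tau$, I would choose sections $\Pi_k$ of $|\tau|$ with $l(\Pi_k) \to \infty$ satisfying
\[
H_k \coloneqq \frac{-\sum_{v \in \Pi_k} \tau(v) \log_q \tau(v)}{\bar{L}_k} \to \dim \tau, \qquad \bar{L}_k \coloneqq \sum_{v \in \Pi_k} l(v) \tau(v),
\]
and define the section-averaging measures
\[
\mu_k = \frac{1}{\bar{L}_k} \sum_{\substack{w \in |\tau| \\ w < \Pi_k}} \tau(w) \, \delta_{\tau^w},
\]
the sum being over strict predecessors of $\Pi_k$. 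Let $\mu$ be a weak-${}^\ast$ subsequential limit of $(\mu_k)$.

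Telescoping arguments analogous to those in the proof of Theorem \ref{FW-correspondence} then show that $\mu$ is stationary (because $\int f \, d\mu_k - \int P f \, d\mu_k$ is a boundary term of size $O(\bar{L}_k^{-1})$) and that $H(\mu_k) = H_k$, so continuity of $H$ gives $H(\mu) = \dim \tau > \dim T - \epsilon/2$. I would then lift $\mu$ to the $S$-invariant $\widetilde{\mu}$ on $\widetilde{M}$ and apply ergodic decomposition: $\widetilde{\mu} = \int \widetilde{\eta}_\omega \, dP(\omega)$, with each $\widetilde{\eta}_\omega$ being $S$-ergodic and descending to an ergodic stationary $\eta_\omega$ on $M$. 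Since $\int H(\eta_\omega) \, dP(\omega) = H(\mu) \ge \dim T - \epsilon/2$, a positive $P$-measure set of $\omega$ satisfies $H(\eta_\omega) \ge \dim T - \epsilon$, and Proposition \ref{a-entropy} then yields (\ref{a-hdim}).

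To establish (\ref{ergodic_correspondence_ineq}), the pointwise bound $\phi_{C_m}(\tau^{vw}) \le 1_{C_m}(vw)$ together with the definition of $d^\ast_T$ gives
\[
d^\ast_T(C_m) \ge \limsup_{n \to \infty} \sup_{v \in |\tau|} \frac{1}{n+1} \sum_{k=0}^n P^k \phi_{C_m}(\tau^v).
\]
For ergodic $\eta = \eta_\omega$, Birkhoff's theorem for the Markov chain produces $\eta$-a.e.\ convergence of the Cesaro averages of $P^k \phi_{C_m}$ to $\int \phi_{C_m} \, d\eta$. Since $\mathrm{supp}(\eta) \subset \mathrm{supp}(\mu) \subset \overline{\{\tau^v \colon v \in |\tau|\}}$, an $\eta$-generic $\tau'$ can be approximated by a sequence $\tau^{v_j} \to \tau'$ with $v_j \in |\tau|$, and a diagonal argument exploiting lower semi-continuity of $P^k \phi_{C_m}$ (inherited from that of $\phi_{C_m}$, since $A_r$ is open) produces $v \in |\tau|$ whose Cesaro average is at least $\int \phi_{C_m} \, d\eta - o(1)$. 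Intersecting the good $\omega$-sets over the countable family of pairs $(C, m)$ with the entropy bound then selects a single $\eta$ satisfying all conclusions.

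The main obstacle is this last approximation step: Birkhoff applies at $\eta$-generic points, while the definition of $d^\ast_T$ is anchored on the discrete family $\{\tau^v \colon v \in |\tau|\}$. Bridging the two relies on the lower semi-continuity of the configuration-detecting functions (closely related to Lemma \ref{approximation}) together with the support containment of $\eta$ in $\overline{\{\tau^v\}}$, which in turn follows from the weak-${}^\ast$ construction of $\mu$ as a limit of measures supported on this discrete family.
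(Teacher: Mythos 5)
Your proposal is correct and follows essentially the same route as the paper: Frostman's lemma to get $\theta$ with $|\theta|\subset T$ and $\dim\theta\ge\dim T-\epsilon$, Ces\`aro averages of $P^n\delta_\theta$ with a weak-${}^\ast$ limit, an ergodic component retaining the entropy bound, and then a generic point approximated by vertices $\theta^{v_k}$ together with the semicontinuity of the configuration-detecting functions (Lemma \ref{approximation}) to deduce (\ref{ergodic_correspondence_ineq}) from the definition of $d^\ast_T$. The only real deviation is your section-averaged $\mu_k$, which is a harmless but unnecessary complication: the paper's level-averaging already yields $H(\eta')\ge\dim\theta$ because the full level sets $|\theta|(M_k+1)$ are themselves sections and $\dim\theta$ is a $\liminf$ over all sections.
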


\begin{proof}
For any $\epsilon>0$, by Frostman's lemma (see \cite[Theorem 8.8]{Ma} and \cite[Theorem 3.12]{Ho}) there exists $\theta \in M$ such that $|\theta| \subset T$ and $\dim \theta \ge \dim T - \epsilon$. 
Let $(M_k)_{k\ge 1}$ be an increasing sequence such that the distributions 
\[\eta'_k = \frac{1}{M_k+1} \sum_{i=0}^{M_k} P^i\delta_\theta = \frac{1}{M_k+1}\sum_{l(v) \le M_k} \theta(v)\delta_{\theta^v}.\]
converge to a distribution $\eta'$ in the weak-${}^\ast$ topology. 

\begin{lemma} \cite[Lemma 4]{Fur1}
$H(\eta') \ge \dim \theta$. 
\end{lemma}

\begin{proof}
As in the calculation (\ref{entropy_boundary}) we have
\[ H(\eta') = -\lim_{k \to \infty} \frac{1}{M_k+1}\sum_{l(v)=M_k+1} \theta(v)\log_q\theta(v) = \lim_{k \to \infty}\frac{-\sum_{v \in \Pi_k}\theta(v)\log_q \theta(v)}{\sum_{v \in \Pi_k} l(v)\theta(v)} \ge \dim \theta \]
where $\Pi_k$ is the section $|\theta|(M_k+1) = \{v \in |\theta| \colon l(v)=M_k+1\}$. 
\end{proof}

The support of $\eta'$ is contained in the compact set $D(\theta) = \overline{\{\theta^v \colon v \in |\theta|\}}$, and by Choquet's theorem \cite[Chapter 3]{Phelps} there exists an ergodic distribution $\eta$ supported on $D(\theta)$ such that $H(\eta) \ge H(\eta') \ge \dim \theta \ge \dim T - \epsilon$. 
The inequality (\ref{a-hdim}) immediately follows from Proposition \ref{a-entropy}. 

Since $\eta$ is ergodic, the mean ergodic theorem for contractions \cite[Theorem 8.6]{EFHN} implies 
\[\frac{1}{N+1} \sum_{i=0}^N P^if \to \int_M f\,d\eta\]
in $L^2(M,\eta)$ for $f \in L^2(M,\eta)$. 
By diagonalisation there exists an increasing sequence $(N_k)_{k\ge 1}$ and $\tau \in D(\theta)$ such that 
\begin{equation}\label{generic}
\frac{1}{N_k+1} \sum_{i=0}^{N_k} P^if(\tau) \to \int_M f\,d\eta
\end{equation}
for all $f$ in a countable set of continuous functions. 
Taking this set to be dense in $C(M)$ under the uniform norm, the limit (\ref{generic}) holds for all continuous functions. 
Letting $v_k \in |\theta|$ be a sequence of vertices such that $\theta^{v_k} \to \tau$, and passing to a subsequence of $(v_k)$ if necessary, it follows that the sequence of measures $\eta_k$ defined by
\[\eta_k = \frac{1}{N_k+1}\sum_{i=0}^{N_k}P^i\delta_{\theta^{v_k}} = \frac{1}{N_k+1}\sum_{l(w)\le N_k}\theta^{v_k}(w)\delta_{\theta^{v_kw}}\]
converges weakly to $\eta$. 
For $\epsilon <\dim T$ it follows that
\begin{align*}
d^\ast(C_n) &\ge \limsup_{k \to \infty} \frac{1}{N_k+1} \sum_{l(w) \le N_k} \theta^{v_k}(w)1_{C_n}(v_k w) \\
&\ge \limsup_{k \to \infty} \frac{1}{N_k+1} \sum_{l(w) \le N_k} \theta^{v_k}(w)\phi_{C_n}(\theta^{v_kw}) \\
&= \limsup_{k \to \infty} \int_M \phi_{C_n} \,d\eta_k
\end{align*}
and the inequality (\ref{ergodic_correspondence_ineq}) follows from Lemma \ref{approximation}. 
\end{proof}

\begin{remark}
Composing the projection $(\tau_i)_{i \le 0} \mapsto \tau_0$ with a $C_n$-detecting function gives a map $\widetilde{M} \to [0,1]$ which is positive at $(\tau_i)_{i \le 0}$ if and only if $C$ appears at the root of $|\tau_0|$ with parameter~$n$. 
The recursive constructions of configuration-detecting functions in Subsection \ref{phi-construction} can be used to construct their lifts using the abuses of notation $B_i = \{\widetilde{\tau} \in M \colon \tau_0 \in B_i\}$ and $A_r = \{\widetilde{\tau} \in \widetilde{M} \colon |\{i \colon p_{\widetilde{\tau}}(B_i)>0\}| \ge r\}$. 
Observe that the inequalities (\ref{a-mdim}), (\ref{ineq_1}), (\ref{a-hdim}), and (\ref{ergodic_correspondence_ineq}) are still valid when the distributions $\mu$ and $\eta_\epsilon$ and the configuration detecting functions $\phi_{C_n}$ are replaced with their lifts on $\widetilde{M}$. 
In the remainder of the paper we work only with $(\widetilde{M},p)$ and use Theorems \ref{FW-correspondence} and \ref{ergodic-correspondence} for the endomorphic extension without comment. 
\end{remark}

\section{Proof of direct theorems}
\label{Direct_Theorems}

Using the correspondence principles of Section \ref{Correspondence_principle}, we bound from below the densities of the sets of popular differences for trees. 
We first prove such a result for the generic parameters of the configuration $F^r$, since the proof is relatively simple but contains the main ideas. 

\begin{thm}\label{Theorem4.1}
Let $T$ be a tree. 
For $2 \le r \le q$ we have
\[\underline{d}(\overline{G}(F^r)) \ge \frac{\overline{\dim}_MT - \log_q(r-1)}{1-\log_q(r-1)} \mbox{  and  }
\underline{d}(G^*(F^r)) \ge \frac{\dim T - \log_q(r-1)}{1-\log_q(r-1)}.\]
\end{thm}
\begin{proof}
Since $P$ and $S$ are adjoint, Theorem \ref{FW-correspondence} gives 
\[\overline{d}(F_n^r) \ge \int_{\widetilde{M}} \phi_{F_n^r}\,d\widetilde{\mu} = \int_{\widetilde{M}} 1_{A_r} P^n 1_{A_r} \,d \widetilde{\mu} = \int_{\widetilde{M}} 1_{A_r}S^n1_{A_r}\,d\widetilde{\mu} = \widetilde{\mu}(A_r \cap S^{-n}A_r).\]
Hence $\overline{G}(F^r) \supset \mathcal{R} = \{n \in \mathbb{N} \colon \widetilde{\mu}(A_r \cap S^{-n}A_r)>0\}$, so $\underline{d}(\overline{G}(F^r)) \ge \underline{d}(\mathcal{R})$. 
By the mean ergodic theorem
\[\underline{d}(\mathcal{R}) = \liminf_{N \to \infty} \frac{1}{N+1} \sum_{n=0}^N 1_{\mathcal{R}}(n) \ge \liminf_{N \to \infty}\frac{1}{N+1} \sum_{n=0}^N \frac{\widetilde{\mu}(A_r \cap S^{-n}A_r)}{\widetilde{\mu}(A_r)} \ge \widetilde{\mu}(A_r),\]
and the theorem follows from inequality~(\ref{a-mdim}) of Theorem \ref{FW-correspondence}. 

Using Theorem~\ref{ergodic-correspondence} in place of Theorem~\ref{FW-correspondence} in the above argument, we obtain the second inequality after taking $\epsilon \to 0$. 
\end{proof}

Theorem \ref{Theorem4.1} is also immediate from the corresponding result for $D^{r,2}$, which we prove now.

\begin{thm}\label{direct-D-upper}
Let $T$ be a tree. 
For $2 \le r \le q$ we have
\[\underline{d}(\overline{G}(D^{r,2})) \ge \frac{\overline{\dim}_MT - \log_q(r-1)}{1-\log_q(r-1)}
\mbox{  and  } 
\underline{d}(G^*(D^{r,2})) \ge \frac{\dim T - \log_q(r-1)}{1-\log_q(r-1)}.\]
\end{thm}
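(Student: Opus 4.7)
The plan is to follow the proof of Theorem~\ref{first_direct_thm} closely, replacing the $F^r$-detecting function $\phi_{F^r_m}$ by $\phi_{D^{r,2}_m}$. I would first apply Theorem~\ref{FW-correspondence} (respectively Theorem~\ref{ergodic-correspondence} with $\epsilon\to 0$ at the end) to obtain a stationary distribution $\widetilde\mu$ (respectively ergodic $\widetilde\eta_\epsilon$) on $\widetilde M$ satisfying the lower bounds on $\widetilde\mu(A_r)$ and $\widetilde\eta_\epsilon(A_r)$ from inequalities~(\ref{a-mdim}) and (\ref{a-hdim}), together with
\[\overline{d_T}(D^{r,2}_m)\ge\int_{\widetilde M}\phi_{D^{r,2}_m}\,d\widetilde\mu\quad\text{and}\quad d^*_T(D^{r,2}_m)\ge\int_{\widetilde M}\phi_{D^{r,2}_m}\,d\widetilde\eta_\epsilon.\]
It then suffices to prove $\underline d\{m\ge 1:\int\phi_{D^{r,2}_m}\,d\widetilde\mu>0\}\ge\widetilde\mu(A_r)$, and analogously for $\widetilde\eta_\epsilon$.

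Using the formula $\phi_{D^{r,2}_m}=r!\sum_{|I|=r}\prod_{i\in I}P(1_{B_i}P^{m-1}1_{A_r})$ from Subsection~\ref{phi-construction} together with the identity $P(1_{B_i}h)(\widetilde\tau)=\tau_0(i)\,h(\widetilde\tau^i)$, I would rewrite
\[\int\phi_{D^{r,2}_m}\,d\widetilde\mu=r!\sum_{|I|=r}\int\prod_{i\in I}\tau_0(i)\,P^{m-1}1_{A_r}(\widetilde\tau^i)\,d\widetilde\mu.\]
Positivity of this integral is equivalent to the existence of a positive-measure set of $\widetilde\tau$ for which at least $r$ distinct $i\in\Lambda$ satisfy both $\tau_0(i)>0$ and $P^{m-1}1_{A_r}(\widetilde\tau^i)>0$.

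The main obstacle lies in establishing the density bound on this set of ``good'' $m$. In the $F^r$ case, the adjoint identity $\int 1_{A_r}P^m 1_{A_r}\,d\widetilde\mu=\widetilde\mu(A_r\cap S^{-m}A_r)$ permitted a direct application of the mean ergodic theorem; here, the product over $r$ distinct branches prevents such a clean reduction. To bypass this I would combine Proposition~\ref{prop_4} (which gives $P^{m-1}1_{A_r}\to\overline{1_{A_r}}$ in $L^2(\widetilde\mu)$ with $\int\overline{1_{A_r}}\,d\widetilde\mu=\widetilde\mu(A_r)$) with the fact that the $P$-stationarity of $\widetilde\mu$ ensures that each child $\widetilde\tau^i$ inherits the generic ergodic behaviour of the chain. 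Via a Chebyshev-type argument on Ces\`aro averages, for $\widetilde\mu$-a.e.\ $\widetilde\tau$ with $\tau_0\in A_r$ and each $i\in J(\tau_0):=\{j:\tau_0(j)>0\}$, the set $\{m:P^{m-1}1_{A_r}(\widetilde\tau^i)>0\}$ would be shown to have lower density at least $\overline{1_{A_r}}(\widetilde\tau^i)$. A Fubini-style averaging, combined with a coordination argument across the (at least $r$) branches indexed by $J(\tau_0)$, would then yield the required density bound.

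I expect the coordination across branches to be the most delicate step. In the ergodic setting of Theorem~\ref{ergodic-correspondence}, where $\overline{1_{A_r}}$ is a.s.\ constant equal to $\widetilde\eta_\epsilon(A_r)$, the argument should be relatively clean; in the non-ergodic setting of Theorem~\ref{FW-correspondence} a finer analysis exploiting the tail $\sigma$-algebra $\mathcal B_\infty$ and the fact that $P$ and $S$ are mutually inverse on $\mathcal H_\infty$ (Lemma~\ref{lemma3}) will likely be required. Combining the resulting density bound with inequality~(\ref{a-mdim}) (respectively (\ref{a-hdim})) then completes the proof.
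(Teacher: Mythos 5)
Your setup is right: both correspondence principles apply verbatim to $D^{r,2}$, and Proposition~\ref{prop_4} (replacing $P^{m-1}1_{A_r}$ by $P^{m-1}\overline{1_{A_r}}$ up to an error vanishing as $m\to\infty$) is indeed the first move of the paper's proof. But the step you defer as ``the most delicate'' --- coordinating the $r$ branches --- is exactly where your plan breaks down, and the paper resolves it by a completely different mechanism than the one you sketch. Even if, for a.e.\ $\widetilde\tau$ and each child $i$, the set $\{m: P^{m-1}1_{A_r}(\widetilde\tau^{\,i})>0\}$ had lower density at least $\nu(A_r)$, you need all $r$ of these sets to intersect for a common $m$; the intersection of $r$ sets of density $\alpha$ can have density as small as $\max(0,r\alpha-(r-1))$, which is $0$ whenever $\alpha<(r-1)/r$. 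So no ``Fubini-style averaging'' over branches can recover the bound $\underline d(\overline G(D^{r,2}))\ge\widetilde\mu(A_r)$; the losses compound rather than cancel. (There is also a secondary issue: your pointwise density claim for a single branch needs an a.e.\ statement about Ces\`aro averages of $P^{m}1_{A_r}$ at a fixed point, which $L^2$-convergence from Proposition~\ref{prop_4} does not supply in the non-ergodic case.)

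The missing idea is algebraic, not probabilistic. Since $\overline{1_{A_r}}\in\mathscr H_\infty$, Lemma~\ref{lemma3} gives $P^{m-1}\overline{1_{A_r}}=S\bigl(P^{m}\overline{1_{A_r}}\bigr)$, and the intertwining identity $P(fSg)=gPf$ of Lemma~\ref{lem1} then pulls the common factor out of every $P(1_{B_i}\,\cdot\,)$ in the product, collapsing the $r$-fold product into
\[
\overline{\varphi_{D^{r,1}_m}}\cdot\bigl(P^{m}\overline{1_{A_r}}\bigr)^{r},
\]
a single function raised to the $r$-th power times a weight. After truncating to the level sets $Z_\rho=\{\overline{\varphi_{D^{r,1}_m}}\ge\rho\}$, Jensen's inequality bounds $\int 1_{Z_\rho}(P^m1_{Z_\rho})^r\,d\widetilde\mu$ below by $\widetilde\mu(Z_\rho\cap S^{-m}Z_\rho)^r$. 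Positivity of an $r$-th power is positivity of the base, so the whole problem reduces to a \emph{single} return-time set $\{m:\widetilde\mu(Z_\rho\cap S^{-m}Z_\rho)>\delta\widetilde\mu(Z_\rho)^2\}$, whose lower density is controlled by the mean ergodic theorem exactly as in Theorem~\ref{first_direct_thm}; letting $\delta,\rho\to0$ and checking $Z=\{\overline{\varphi_{D^{r,1}_m}}>0\}\supset A_r$ modulo null sets finishes the argument. Without this reduction (or an equivalent device), your outline does not close.
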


\begin{proof}

We start with the proof of the first inequality. The idea is to show that $\overline{G}(D^{r,2})$ essentially contains the set of return times of $A_r$, the density of which we can bound from below by the mean ergodic theorem. 
First observe that by Proposition \ref{prop_4} 
\begin{multline*}
\left| \int_{\widetilde{M}} \phi_{D_n^{r,2}}\,d\widetilde{\mu}-r!\int_{\widetilde{M}} \sum_{\substack{I \subset [q] \\ |I|=r}} \prod_{i \in I} P(1_{B_i}P^{n-1}\overline{1_{A_r}} ) \,d \widetilde{\mu} \right| \\
=\left|r!\int_{\widetilde{M}} \sum_{\substack{I \subset [q] \\ |I|=r}} \prod_{i \in I} P(1_{B_i}P^{n-1}1_{A_r} ) \,d \widetilde{\mu} -r!\int_{\widetilde{M}} \sum_{\substack{I \subset [q] \\ |I|=r}} \prod_{i \in I} P(1_{B_i}P^{n-1}\overline{1_{A_r}} ) \,d \widetilde{\mu} \right|  \xrightarrow{n\to\infty} 0.
\end{multline*}
Since $\overline{1_{A_r}} \in \mathscr{H}^\infty$, by Lemma \ref{lemma3} $P^{n-1}\overline{1_{A_r}} = SP^n\overline{1_{A_r}}$. 
Then by Lemma~\ref{lem1} and orthogonality
\[r!\int_{\widetilde{M}} \sum_{\substack{I \subset [q] \\ |I|=r}} \prod_{i \in I} P(1_{B_i}P^{n-1}\overline{1_{A_r}} ) \,d \widetilde{\mu} = \int_{\widetilde{M}} \overline{\varphi_{D^{r,1}_n}}(P^n\overline{1_{A_r}})^r\,d\widetilde{\mu},\]
recalling $\varphi_{D^{r,1}_n} = r!\sum_{\substack{I \subset [q] \\ |I|=r}} \prod_{i \in I} P1_{B_i}$. 
Define $Z_\rho = \{\widetilde{\tau} \in \widetilde{M} \colon \overline{\varphi_{D^{r,1}_n}}(\widetilde{\tau})\ge \rho\}$, and observe it is well-defined up to a $\widetilde{\mu}$-null set. 
Since $0 \le \varphi_{D^{r,1}_n} \le 1_{A_r} \le 1$ and $0 \le \rho 1_{Z_\rho} \le \overline{\varphi_{D^{r,1}_n}} \le 1$, the positivity of both $P$ and conditional expection imply
\[\int_{\widetilde{M}} \overline{\varphi_{D^{r,1}_n}}(P^n\overline{1_{A_r}})^r\,d\widetilde{\mu} \ge \int_{\widetilde{M}} \overline{\varphi_{D^{r,1}_n}}(P^n\overline{\varphi_{D^{r,1}_n}})^r\,d\widetilde{\mu} \ge \rho^{r+1}\int_{\widetilde{M}} 1_{Z_\rho} (P^n 1_{Z_\rho})^r\,d\widetilde{\mu}.\]
By Jensen's inequality and the adjointness of $P$ and $S$
\[\int_{\widetilde{M}} 1_{Z_\rho} (P^n 1_{Z_\rho})^r\,d\widetilde{\mu} \ge \left(\int_{\widetilde{M}} 1_{Z_\rho}P^n1_{Z_\rho} \,d\widetilde{\mu}\right)^r = \widetilde{\mu}(Z_\rho \cap S^{-n}Z_\rho)^r.\] 
Combining the above with the correspondence principle Theorem \ref{FW-correspondence}, it follows that $\overline{G}(D^{r,2})$ contains a cofinite subset of 
\[\mathcal{R}^\delta(Z_\rho) = \{n \in \mathbb{N} \colon \widetilde{\mu}(Z_\rho \cap S^{-n}Z_\rho) > \delta\widetilde{\mu}(Z_\rho)^2\}\]
for all $\delta, \rho >0$ (since $S$ is $\widetilde{\mu}$-preserving). 
Therefore
\begin{align*}
\underline{d}(\overline{G}(D^{r,2})) \ge \underline{d}(\mathcal{R}^\delta(Z_\rho)) &\ge \liminf_{N \to \infty} \frac{1}{N+1} \sum_{\substack{n \le N \\ n \in \mathcal{R}^\delta(Z_\rho)}} \frac{\widetilde{\mu}(Z_\rho \cap S^{-n}Z_\rho)}{\widetilde{\mu}(Z_\rho)} \\
&\ge \left( \liminf_{N \to \infty} \frac{1}{N+1} \sum_{n=0}^N \frac{\widetilde{\mu}(Z_\rho \cap S^{-n}Z_\rho)}{\widetilde{\mu}(Z_\rho)} \right) - \delta\widetilde{\mu}(Z_\rho) \\
&\ge (1-\delta)\widetilde{\mu}(Z_\rho) \xrightarrow{\delta \to 0} \widetilde{\mu}(Z_\rho) \xrightarrow{\rho \to 0} \widetilde{\mu}(Z),
\end{align*}
where the last inequality follows from the mean ergodic theorem and 
\[Z = \{\widetilde{\tau} \in \widetilde{M} \colon \overline{\varphi_{D^{r,1}_n}}(\widetilde{\tau}) >0\}.\] 
Since $1_{Z^c} \in L^\infty(\widetilde{M},\mathscr{B}_\infty)$, properties of the conditional expectation give 
\[0=\int_{\widetilde{M}} 1_{Z^c} \overline{\varphi_{D^{r,1}_n}}\,d\widetilde{\mu} = \int_{\widetilde{M}} 1_{Z^c}\varphi_{D^{r,1}_n}\,d\widetilde{\mu}.\] 
Hence $Z \supset \{\widetilde{\tau} \in \widetilde{M} \colon \varphi_{D^{r,1}_n}(\widetilde{\tau})>0\} = A_r$ up to a $\widetilde{\mu}$-null set, so $\underline{d}(\overline{G}(D^{r,2})) \ge \widetilde{\mu}(A_r)$. 
The theorem then follows from inequality~(\ref{a-mdim}) of Theorem \ref{FW-correspondence}. 

Using Theorem~\ref{ergodic-correspondence} in place of Theorem~\ref{FW-correspondence} in the above proofs, we obtain the second inequality after taking $\epsilon \to 0$. 
\end{proof}

\section{Inverse theorems for return times}
\label{Inverse_theorems_measure}

Let $(X,\mathscr{B},\nu,S)$ be a measure-preserving system, and let $A$ be a measurable set with $\nu(A)>0$. 
If $\mathcal{R} = \{n \in \mathbb{N} \colon \nu(A \cap S^{-n}A) >0\}$ is the set of return times of $A$, then by the mean ergodic theorem $\underline{d}(\mathcal{R}) \ge \nu(A)$. 

\begin{thm} \label{first_inverse_thm_measure}
If $\overline{d}(\mathcal{R}) = \nu(A) > 0$, then there exists an integer $m \ge 1$ such that up to $\nu$-null sets
\[X = \bigsqcup_{i=0}^{m-1} S^{-i} A. \]
\end{thm} 

\begin{proof}

Define $\mathcal{R}_\gamma = \{n \in \mathbb{N} \colon \nu(A \cap S^{-n}A) \ge (1-\gamma)\nu(A) \}$. 

\begin{lemma}\label{lemma4}
	If $n \in \mathcal{R}_\gamma$ and $n' \in \mathcal{R}_{\gamma'}$, then $n + n' \in \mathcal{R}_{\gamma+\gamma'}$. 
\end{lemma}

\begin{proof}
If $B \subset A$ then $\nu(A \cap S^{-n}B) \ge \nu(B)-\gamma\nu(A)$. 
For $B = A \cap S^{-n'}A$ we have 
\[\nu(A \cap S^{-(n+n')}A)  \ge \nu(A \cap S^{-n}(A \cap S^{-n'}A)) \ge \nu(A \cap S^{-n'}A)-\gamma\nu(A) \ge (1-\gamma-\gamma')\nu(A),\]
so $n+n'\in \mathcal{R}_{\gamma+\gamma'}$. 
\end{proof}

\begin{lemma}\label{lemma5}
If $0 < \gamma < \frac{1}{2}$, then $d(\mathcal{R}_\gamma+\mathcal{R}_\gamma)=d(\mathcal{R}_\gamma)=d(\mathcal{R})$. 
\end{lemma}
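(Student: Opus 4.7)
The plan is to squeeze all three quantities to the value $\nu(A)$, using Lemma \ref{lemma4} for one direction and a direct Ces\`aro average comparison for the other.

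First, I would note that $0 \in \mathcal{R}_\gamma$ (because $\nu(A \cap A) = \nu(A) \ge (1-\gamma)\nu(A)$), so $\mathcal{R}_\gamma \subset \mathcal{R}_\gamma + \mathcal{R}_\gamma$. Applying Lemma \ref{lemma4} with $\epsilon = \gamma$ gives $\mathcal{R}_\gamma + \mathcal{R}_\gamma \subset \mathcal{R}_{2\gamma}$, and $2\gamma < 1$ then forces $\mathcal{R}_{2\gamma} \subset \mathcal{R}$. Hence
\[
\overline{d}(\mathcal{R}_\gamma) \;\le\; \overline{d}(\mathcal{R}_\gamma + \mathcal{R}_\gamma) \;\le\; \overline{d}(\mathcal{R}).
\]
Combining the hypothesis $\overline{d}(\mathcal{R}) = \nu(A)$ of Theorem \ref{first_inverse_thm_measure} with the mean-ergodic lower bound $\underline{d}(\mathcal{R}) \ge \nu(A)$ recorded in (\ref{pop_diff}) shows that the density of $\mathcal{R}$ exists and equals $\nu(A)$.

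The substantive step is the reverse inequality $\underline{d}(\mathcal{R}_\gamma) \ge \nu(A)$. Splitting $\{1,\ldots,N\}$ according to $\mathcal{R}_\gamma$, $\mathcal{R} \setminus \mathcal{R}_\gamma$, and $\bN \setminus \mathcal{R}$, and bounding $\nu(A \cap S^{-n}A)$ by $\nu(A)$, $(1-\gamma)\nu(A)$, and $0$ respectively, I obtain
\[
\frac{1}{N}\sum_{n=1}^N \nu(A \cap S^{-n}A) \;\le\; \nu(A)\Bigl[\gamma\,\frac{|\mathcal{R}_\gamma \cap [1,N]|}{N} + (1-\gamma)\,\frac{|\mathcal{R} \cap [1,N]|}{N}\Bigr].
\]
The mean ergodic theorem applied to $1_A$, together with Jensen's inequality for the conditional expectation onto the $S$-invariant $\sigma$-algebra, shows the left-hand side converges to a limit at least $\nu(A)^2$. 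Taking $\liminf_N$ on the right and using $|\mathcal{R} \cap [1,N]|/N \to \nu(A)$ (a genuine limit by the previous paragraph) yields
\[
\nu(A)^2 \;\le\; \nu(A)\bigl[\gamma \underline{d}(\mathcal{R}_\gamma) + (1-\gamma)\nu(A)\bigr],
\]
which rearranges to $\underline{d}(\mathcal{R}_\gamma) \ge \nu(A)$.

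Combining the two directions forces $d(\mathcal{R}_\gamma)$, $d(\mathcal{R}_\gamma + \mathcal{R}_\gamma)$, and $d(\mathcal{R})$ all to exist and equal $\nu(A)$. The main subtlety I foresee is ensuring the Ces\`aro bound transfers cleanly to the limit: taking $\liminf$ rather than $\limsup$ on the right is essential, as it allows the genuine convergence of $|\mathcal{R} \cap [1,N]|/N$ to be used exactly while extracting only a liminf for the term involving $\mathcal{R}_\gamma$. Beyond this bookkeeping I do not expect a serious obstacle.
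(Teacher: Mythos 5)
Your proof is correct and follows essentially the same route as the paper's: the same Ces\`aro decomposition over $\mathcal{R}_\gamma$, $\mathcal{R}\setminus\mathcal{R}_\gamma$ and $\mathcal{R}^c$, the same mean-ergodic lower bound $\ge\nu(A)^2$, and the same sandwich $\mathcal{R}_\gamma\subset\mathcal{R}_\gamma+\mathcal{R}_\gamma\subset\mathcal{R}_{2\gamma}\subset\mathcal{R}$ via Lemma \ref{lemma4}. The only difference is bookkeeping: the paper passes to subsequences along which $d_{N_k}(\mathcal{R}_\gamma)$ exists, whereas you work directly with $\liminf$ and $\limsup$; both are valid.
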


\begin{proof}

Let $(N_k)_{k\ge 1}$ be an increasing sequence such that 
\[d_{(N_k)}(\mathcal{R}_\gamma) = \lim_{k \to \infty} \frac{|\mathcal{R}_\gamma \cap \{0,\ldots,N_k\}|}{N_k+1}\]
 exists. 
By the mean ergodic theorem 
\begin{align*}
\nu(A) &\le \lim_{k \to \infty} \frac{1}{N_k+1} \sum_{n=0}^{N_k} \frac{\nu(A \cap S^{-n}A)}{\nu(A)} = \lim_{k \to \infty} \frac{1}{N_k+1} \sum_{n=0}^{N_k} 1_\mathcal{R}(n) \frac{\nu(A \cap S^{-n}A)}{\nu(A)} \\
&\le \lim_{k \to \infty} \frac{1}{N_k+1} \left( \sum_{n \in \mathcal{R}_\gamma, n \le N_k} 1_{\mathcal{R}}(n) + \sum_{n \notin \mathcal{R}_\gamma,n\le N_k} 1_\mathcal{R}(n)(1-\gamma)\right) \\
&= d_{(N_k)}(\mathcal{R}_\gamma) + (1-\gamma)(d(\mathcal{R})-d_{(N_k)}(\mathcal{R}_\gamma)). 
\end{align*}
Rearranging and using the assumption $d(\mathcal{R})=\nu(A)$, it follows that
\[\nu(A) \le d_{(N_k)}(\mathcal{R}_\gamma) \le d(\mathcal{R})=\nu(A).\]
Hence $d_{(N_k)}(\mathcal{R}_\gamma)=d(\mathcal{R})$. 
By Lemma \ref{lemma4} $\mathcal{R}_\gamma+\mathcal{R}_\gamma \subset \mathcal{R}_{2\gamma} \subset \mathcal{R}$, so
	\[d(\mathcal{R})=d_{(N_k)}(\mathcal{R}_\gamma) = \underline{d}_{(N_k)}(\mathcal{R}_\gamma) \le \underline{d}_{(N_k)}(\mathcal{R}_\gamma+\mathcal{R}_\gamma) \le \overline{d}_{(N_k)}(\mathcal{R}_\gamma+\mathcal{R}_\gamma) \le d(\mathcal{R}).\]
Hence $d_{(N_k)}(\mathcal{R}_\gamma+\mathcal{R}_\gamma)$ exists and equals $d(\mathcal{R})$.
Since $(N_k)_{k \ge 1}$ was arbitrary, the conclusion follows. 
\end{proof}

For $0 < \gamma < \frac{1}{2}$, Lemma \ref{lemma5} and Kneser's theorem \cite{Kn} (see also \cite[Theorem 1.1]{Bi}) therefore imply the existence of $m \ge 1$ and $K \subset \{0,\ldots,m-1\}$ such that 
\begin{itemize}
\item 
$\mathcal{R}_\gamma \subset K + m \mathbb{N}$,
\item 
$|K+K| = 2 |K| -1$, where the operation on the left hand side is in $ \mathbb{Z} /m \mathbb{Z}$, and
\item 
$\mathcal{R}_\gamma + \mathcal{R}_\gamma \subset K+K +m\bN$ with $ |(K+K +m\bN) \setminus (\mathcal{R}_\gamma + \mathcal{R}_\gamma)| < \infty$.
\end{itemize}
It follows that $K = \{0\}$, so $\mathcal{R}_\gamma \subset m\mathbb{N}$ and $d(\mathcal{R})=d(\mathcal{R}_\gamma)=d(\mathcal{R}_\gamma+\mathcal{R}_\gamma)=m^{-1}$. 
Further, for all $n \in \mathcal{R}$ there exists $\gamma>0$ small enough such that $n+\mathcal{R}_\gamma \subset \mathcal{R}$ by Lemma \ref{lemma4}. 
Since in addition $\mathcal{R}_\gamma \subset \mathcal{R}$ and $d(\mathcal{R})=d(\mathcal{R}_\gamma)$, it follows that $n \in m\mathbb{N}$. 
Then the $m$ sets $S^{-i}A$, $0\le i \le m-1$ are disjoint (up to $\nu$-null sets) and each of measure $m^{-1}$. 
\end{proof}

\begin{thm} \label{second_inverse_thm_measure}
If $(X,\mathscr{B},\nu,S)$ is ergodic and 
\[0< \underline{d}(\mathcal{R}) < \frac{3}{2} \nu(A),\]
then there exists an integer $m \ge 1$ such that $\mathcal{R} = m \bN$ and 
$X = \bigsqcup_{i=0}^{m-1} S^{-i} \left( \bigcup_{j=0}^{\infty} S^{-jm}A \right)$ up to $\nu$-null sets. 
\end{thm}

\begin{proof}
By \cite[Section 1.5]{BFS} all the theorems in \cite{BFS} hold for ergodic $\mathbb{N}$-actions, so \cite[Theorem 1.3]{BFS} gives the existence of $m \ge 1$ such that $\mathcal{R} = m \mathbb{N}$.
Therefore, the sets 
\[\bigcup_{j=0}^{\infty} S^{-jm}A , S^{-1}\left(\bigcup_{j=0}^{\infty} S^{-jm}A\right) ,\ldots,S^{-(m-1)}\left(\bigcup_{j=0}^{\infty} S^{-jm}A \right)\]
 are mutually disjoint up to $\nu$-null sets, and ergodicity implies that they partition $X$. 
\end{proof} 

\section{Inverse theorems for trees}
\label{inverse_theorems_trees}

In this section we prove inverse results for Theorems \ref{Theorem4.1} and \ref{direct-D-upper} (Theorems \ref{first_inverse_thm_trees}, \ref{second_inverse_thm_trees}, and \ref{third_inverse_thm_trees}) using the results of the previous section. 

\begin{thm}\label{first_inverse_thm_trees}
If $T$ is a tree and $2 \le r \le q$ with
\[\overline{d}(\overline{G}(F^r)) =\frac{\overline{\dim}_MT - \log_q(r-1)}{1-\log_q(r-1)}>0,\]
then $\overline{\dim}_M T =  m^{-1}(1-\log_q(r-1))+\log_q(r-1)$ for some positive integer $m$. 
Moreover, $\overline{d}(V_1^{r,m,mk}) >0$ for every $k \ge 1$. 
\end{thm}

\begin{proof}

Let $\mathcal{R} = \{n \in \mathbb{N} \colon \widetilde{\mu}(A_r \cap S^{-n}A_r)>0\}$. 
Combining the proof of Theorem \ref{Theorem4.1}, Theorem \ref{first_inverse_thm_measure}, and the hypothesis we obtain
\[\overline{d}(\overline{G}(F^r)) \ge \overline{d}(\mathcal{R}) \ge \underline{d}(\mathcal{R}) \ge \widetilde{\mu}(A_r) \ge \frac{\overline{\dim}_MT - \log_q(r-1)}{1-\log_q(r-1)}=\overline{d}(\overline{G}(F^r)).\]
Therefore $\widetilde{\mu}(A_r) = \overline{d}(\overline{G}(F^r)) = \overline{d}(\mathcal{R})=\underline{d}(\mathcal{R})$ is positive; by Theorem \ref{first_inverse_thm_measure} it equals $m^{-1}$ for some positive integer $m$, and $\widetilde{M} =\bigsqcup_{i=0}^{m-1} S^{-i} A_r$ up to $\widetilde{\mu}$-null sets. 

The above also shows that equality holds in Proposition \ref{a-entropy} for $\widetilde{\mu}$, whence $\int_{A_r} H\,d\widetilde{\mu} = \widetilde{\mu}(A_r)$ and $\int_{A_r^c}H\,d\widetilde{\mu} = (1-\widetilde{\mu}(A_r))\log_q(r-1)$.
The bounds on $H$ then imply $\widetilde{\mu}$-almost everywhere equalities 
\begin{equation}\label{eq1}
\prod_{i \in \Lambda} P1_{B_i}=c_11_{A_r}=c_11_{A_q}
\end{equation}
\begin{equation}\label{eq2}
1_{A_{r-1}}\sum_{\substack{I \subset \Lambda \\ |I|=r-1}} \prod_{i \in I} P1_{B_i} = 1_{A_r^c}\sum_{\substack{I \subset \Lambda \\ |I|=r-1}} \prod_{i \in I} P1_{B_i} = c_21_{A_r^c}, 
\end{equation}
where $c_1 = q^{-q}$ and $c_2 = (r-1)^{1-r}$. 

Recall from Section \ref{phi-construction} the operators $R_{r-1,1}$ and $R_{q,1}$ on $L^\infty(\widetilde{M},\widetilde{\mu})$, which for simplicity we denote by $R_1$ and $R_2$: 
\[R_1f = \sum_{\substack{I \subset \Lambda \\ |I|=r-1}} \prod_{i \in I} P(1_{B_i}f); \qquad R_2f = \prod_{i \in \Lambda} P(1_{B_i}f).\]

Using the facts determined above we compute $\phi_{V_1^{r,m,mk}}' =(R_2R_1^{m-1})^k1_{A_q}$. 
In the following, equalities are only up to $\widetilde{\mu}$-null sets. 
We compute first the case $k=1$. 
Note that $A_q = S^{-m}A_q$ and $1_{S^{-i}A_q} = S1_{S^{-i+1}A_q}$ for $i \ge 1$. 
By Lemma \ref{lem1} 
\begin{align*}
R_11_{A_q} = \sum_{\substack{I \subset \Lambda \\ |I|=r-1}} \prod_{i \in I} P(1_{B_i}1_{A_q}) &= \sum_{\substack{I \subset \Lambda \\ |I|=r-1}}\prod_{i \in I} P(1_{B_i}S1_{S^{-m+1}A_q}) \\
&= 1_{S^{-m+1}A_q}\sum_{\substack{I \subset \Lambda \\ |I|=r-1}} \prod_{i \in I} P1_{B_i} \\
&= c_21_{S^{-m+1}A_q}
\end{align*}
where the last equality follows from (\ref{eq2}) and the fact $S^{-m+1}A_q =S^{-m+1}A_r \subset A_r^c$. 
Since $R_1$ is homogeneous of degree $r-1$, repeating this calculation gives
\[R_1^{m-1}1_{A_q}=c_2^{\sum_{j=0}^{m-2}(r-1)^j}1_{S^{-1}A_q}\]
and hence
\begin{align*}
\phi_{V_1^{r,m,m}}' &= R_2\left(c_2^{\sum_{j=0}^{m-2}(r-1)^j}1_{S^{-1}A_q}\right) \\
&= c_2^{q\sum_{j=0}^{m-2}(r-1)^j}\prod_{i \in \Lambda}P(1_{B_i}S1_{A_q}) \\
&= c_2^{q\sum_{j=0}^{m-2}(r-1)^j}1_{A_q}\prod_{i \in \Lambda} P1_{B_i} \\
&= c_1c_2^{q\sum_{j=0}^{m-2}(r-1)^j}1_{A_q}. 
\end{align*}
Letting $d_1 = c_1c_2^{q\sum_{j=0}^{m-2}(r-1)^j}$ and defining inductively $d_k = d_{k-1}^{q(r-1)^{m-1}}d_1$, it follows that $\phi_{V_1^{r,m,mk}}' = d_k1_{A_q}$ $\widetilde{\mu}$-almost everywhere. 
Then by the correspondence principle Theorem \ref{FW-correspondence}
\[\overline{d}(V_1^{r,m,mk}) \ge \int_{\widetilde{M}} \phi_{V_1^{r,m,mk}}\,d\widetilde{\mu} \ge \int_{\widetilde{M}} \phi_{V_1^{r,m,mk}}'\,d\widetilde{\mu} = d_k\widetilde{\mu}(A_q) = \frac{d_k}{m} >0\]
for all $k \ge 1$. 
\end{proof}

\begin{thm}\label{second_inverse_thm_trees}
If $T$ is a tree and $2 \le r \le q$ with 
\begin{align}
\label{inverse_thm_ineq_1}
\underline{d}(G^\ast(F^r)) =\frac{\dim T - \log_q(r-1)}{1-\log_q(r-1)}>0
\end{align} 
or
\begin{align}
\label{inverse_thm_ineq_2}
\overline{d}(G^\ast(D^{r,2})) =\frac{\dim T - \log_q(r-1)}{1-\log_q(r-1)}>0,
\end{align}
then $\dim T = m^{-1}(1-\log_q(r-1))+\log_q(r-1)$ for some positive integer $m$. 
Moreover, $d^\ast(V_1^{r,m,mk}) >0$ for every $k \ge 1$. 
\end{thm}

\begin{proof}

Fix $\epsilon>0$ small enough, and let $\mathcal{R} = \{n \in \mathbb{N} \colon \widetilde{\eta_\epsilon}(A_r \cap S^{-n}A_r) >0\}$. 
In the case of (\ref{inverse_thm_ineq_1}), from the proof of Theorem \ref{Theorem4.1} we have 
\[\frac{\dim T - \log_q(r-1)}{1-\log_q(r-1)} = \underline{d}(G^\ast(F^r)) \ge \underline{d}(\mathcal{R}) \ge \widetilde{\eta_\epsilon}(A_r) \ge \frac{\dim T - \epsilon -\log_q(r-1)}{1-\log_q(r-1)},\]
so $\underline{d}(\mathcal{R}) \le \frac{3}{2}\widetilde{\eta_\epsilon}(A_r)$ for small enough $\epsilon$. 
By Theorem \ref{second_inverse_thm_measure} there is a positive integer $m$ such that $\mathcal{R}=m\mathbb{N}$ and $\widetilde{M} =\bigsqcup_{i=0}^{m-1} S^{-i} \left( \bigcup_{j=0}^{\infty} S^{-mj}A_r \right) $ up to $\widetilde{\eta_\epsilon}$-null sets. 

In the case of (\ref{inverse_thm_ineq_2}), we invoke the proof of Theorem \ref{direct-D-upper}. 
Recall that there exist a measurable set $Z$ such that $A_r \subset Z$ modulo $\widetilde{\eta_{\epsilon}}$-null sets and an increasing chain of measurable sets $(Z_\rho)_{\rho>0}$ with $\bigcup_{\rho > 0} Z_\rho = Z$ such that for every $\delta > 0$ we have 
\begin{align*}
\frac{\dim T - \log_q(r-1)}{1-\log_q(r-1)} &= \overline{d}(G^\ast(D^{r,2})) \ge \overline{d}(\mathcal{R}^\delta(Z_\rho)) \\
&\ge \left( \liminf_{N \to \infty} \frac{1}{N+1} \sum_{n=0}^N \frac{\widetilde{\eta_\epsilon}(Z_\rho \cap S^{-n}Z_\rho)}{\widetilde{\eta_\epsilon}(Z_\rho)} \right) - \delta\widetilde{\eta_\epsilon}(Z_\rho) \\
&\ge (1-\delta)\widetilde{\eta_\epsilon}(Z_\rho) \xrightarrow{\delta \to 0} \widetilde{\eta_\epsilon}(Z_\rho) \xrightarrow{\rho \to 0} \widetilde{\eta_\epsilon}(Z) \\
&\ge \widetilde{\eta_\epsilon}(A_r) \ge \frac{\dim T - \epsilon-\log_q(r-1)}{1-\log_q(r-1)},
\end{align*}
where $\mathcal{R}^\delta(Z_\rho) = \{ n \in \mathbb{N} \colon \widetilde{\eta_\epsilon}(Z_\rho \cap S^{-n}Z_\rho) > \delta\widetilde{\eta_\epsilon}(Z_\rho)^2\}$. 
Hence for small enough $\epsilon$ and $\rho$ the assumptions of Theorem \ref{Appendix_thm} are satisfied, so there exists $m \ge 1$ such that $\mathcal{R}(Z_\rho) = \mathcal{R}^\delta(Z_\rho) = m \mathbb{N}$, where $\mathcal{R}(Z_\rho) = \{ n \in \mathbb{N} \colon \widetilde{\eta_\epsilon}(Z_\rho \cap S^{-n}Z_\rho) > 0 \}$. 
Since this is true for all $\rho > 0$ small enough and $\mathcal{R} \subset \bigcup_{\rho > 0} \mathcal{R}(Z_\rho)$, we conclude that for $\epsilon$ small enough there exists $m \ge 1$ such that $\mathcal{R} \subset m \mathbb{N}$. 
This immediately implies that $\widetilde{M} =\bigsqcup_{i=0}^{m-1} S^{-i} \left( \bigcup_{j=0}^{\infty} S^{-mj}A_r \right) $ up to $\widetilde{\eta_\epsilon}$-null sets. 

In both cases, for small $\epsilon$ the above inequalities force 
\[\frac{\dim T - \log_q(r-1)}{1-\log_q(r-1)}  = m^{-1},\]
and hence
\begin{align}
\label{crucial}
\widetilde{\eta_\epsilon}(A_r) \ge (1 - \epsilon') \frac{1}{m} = (1- \epsilon') \widetilde{\eta_\epsilon}\left(\bigcup_{j=0}^{\infty} S^{-mj}A_r\right),
\end{align}
where $\epsilon' \to 0$ as $\epsilon \to 0$.
We also have 
\[\dim T \ge \widetilde{\eta_\epsilon}(A_r)+(1-\widetilde{\eta_\epsilon}(A_r))\log_q(r-1) \ge H(\widetilde{\eta_\epsilon}) \ge \dim T-\epsilon\] 
and hence the pair of inequalities
\begin{align}\label{A_r-entropy}
\int_{A_r} H\,d\widetilde{\eta_\epsilon}&\ge \widetilde{\eta_\epsilon}(A_r)-\epsilon \\ \label{A_r^c-entropy}
\int_{A_r^c} H\,d\widetilde{\eta_\epsilon} &\ge (1-\widetilde{\eta_\epsilon}(A_r))\log_q(r-1)-\epsilon.
\end{align}

We denote by $\bold{A_r} =  \bigcup_{j=0}^{\infty} S^{-mj}A_r$. 
Then we have $\widetilde{M} = \bigsqcup_{i=0}^{m-1} S^{-i}\bold{A_r}$.
Given $\widetilde{\tau} \in \widetilde{M}$ and $E \subset \widetilde{M}$, observe that $S^{-i}(\widetilde{\tau}) \subset E$ if and only if $P^i1_E(\widetilde{\tau}) = 1$. 
For $i \ge 0$, define $E_i$ to be $\bold{A_r}$ if $m$ divides $i$ and ${\bold{A_r^c}}$ otherwise. 
Then the $m$-periodicity of $\bold{A_r}$ and $\bold{A_r^c} =\bigsqcup_{i=1}^{m-1} S^{-i} \bold{A_r}$ under $S^{-1}$ gives $\widetilde{\eta_\epsilon}$-almost everywhere equalities
\[P^i1_{E_i} = P^iS^i1_{S^{-i}E_i} = 1_{S^{-i}E_i},\]
so the set $\bold{A_r'} = \bigcap_{i \ge 0} \{ \widetilde{\tau} \in \widetilde{M} \colon P^i1_{E_i}(\widetilde{\tau}) = 1\}$ is a $\widetilde{\eta_\epsilon}$-conull subset of $\bold{A_r}$. 

Define for $\delta >0$ the set
\[A_\delta = \bigcap_{i=0}^{mk} \{ \widetilde{\tau} \in \bold{A_r'} \colon P^iH(\widetilde{\tau}) \ge c_i-\delta\}, \qquad c_i = \begin{cases}
1 & \text{$m \mid i$}\\
\log_{q}(r-1) & \text{otherwise}. 
\end{cases}  \]
It follows from (\ref{crucial}) and inequalities (\ref{A_r-entropy}) and (\ref{A_r^c-entropy}) that by choosing  $\epsilon$ small enough we can guarantee that $\widetilde{\eta_\epsilon}(A_\delta) > 0$.
We will show the existence of $\delta$ such that the configuration $V_1^{r,m,mk}$ appears at the root of $|\tau_0|$ for every $\widetilde{\tau} = (\tau_i)_{i \le 0} \in A_\delta$. 
First notice that by construction of $\bold{A_r'}$, if $\widetilde{\tau} \in A_\delta$ and $v \in |\tau_0|$ with $0 \le l(v) \le mk$ then $H(\widetilde{\tau}^v) \le c_{l(v)}$. 
Hence for $0 \le i \le mk$
\begin{equation}
\label{P-entropy}
c_i-\delta \le P^iH(\widetilde{\tau}) = \sum_{l(v) = i} \tau_0(v)H(\widetilde{\tau}^v) \le c_i. 
\end{equation}
If $H(\widetilde{\tau}) > \log_q(q-1)$ then $\widetilde{\tau} \in A_q$, and if $H(\widetilde{\tau}) > \log_q(r-2)$ then $\widetilde{\tau} \in A_{r-1}$. 
To prove the appearance of $V_1^{r,m,mk}$ at the root of $|\tau_0|$ it therefore suffices to give sufficiently large lower bounds for $H(\widetilde{\tau}^v)$ for $l(v) \le mk$. 

\begin{lemma}
For every $\delta_1,\delta_2 >0$ there exists $\delta >0$ such that for $1 \le j \le mk+1$ (a) the set $\{\tau_0(v) \colon \widetilde{\tau} \in A_\delta, v \in |\tau_0|(j)\} \subset [0,1]$ is contained in an interval of length~$< \delta_1$, and (b) for all $\widetilde{\tau} \in A_\delta$ and $v \in |\tau_0|$ with $l(v) \le j-1$ we have $H(\widetilde{\tau}^v) \ge c_{l(v)}-\delta_2$. 
\end{lemma}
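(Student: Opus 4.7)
The plan is to establish (a) and (b) simultaneously for all $j \le kn+1$ by a single induction on $j$, fixing a uniform choice of $\delta$ at the end. The two assertions are coupled: a good lower bound on $H(\widetilde{\tau}^v)$ for $l(v) = j-1$ forces the one-step splitting probabilities $\tau_0^v(a)$ to be close to the uniform distribution on their support, which in turn produces (a) at level $j$ together with a positive lower bound on $\tau_0(v)$ for $l(v) = j$; that lower bound is what lets us upgrade the averaged entropy estimate built into the definition of $A_\delta$ to pointwise entropy bounds at the next level. Write $m_i = q$ if $k \mid i$ and $m_i = r-1$ otherwise, so that $c_i = \log_q m_i$.

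The key preliminary, which I would derive from uniform continuity of $-x\log_q x$ on $[0,1]$ together with compactness of the simplex, is a modulus $\psi\colon [0,\infty) \to [0,\infty)$ with $\psi(\eta) \to 0$ as $\eta \to 0$ such that whenever $\widetilde{\omega} \in \bold{A_r'} \cap E_i$ satisfies $H(\widetilde{\omega}) \ge c_i - \eta$, the set $|\omega_0|(1)$ has exactly $m_i$ elements and $|\omega_0(a) - 1/m_i| \le \psi(\eta)$ for each $a \in |\omega_0|(1)$. Combining the definition of $A_\delta$ with (\ref{P-entropy}) gives, for each $0 \le i \le kn$,
\[
\sum_{l(v)=i} \tau_0(v)\bigl(c_i - H(\widetilde{\tau}^v)\bigr) \le \delta,
\]
and this averaged inequality is the source of all the pointwise entropy lower bounds.

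For the base case $j = 1$, the estimate $H(\widetilde{\tau}) \ge 1 - \delta$ together with $\widetilde{\tau} \in A_r$ and the continuity statement gives (a) with parameter $2\psi(\delta)$, while (b) only concerns $v = \emptyset$ and holds as soon as $\delta \le \delta_2$. For the inductive step $j \to j+1$, assertion (b) at level $j$ and the continuity statement yield $|\tau_0^v(a) - 1/m_{l(v)}| \le \psi(\eta_{l(v)})$ for every $v$ with $l(v) \le j-1$ and every $a \in |\tau_0^v|(1)$, where $\eta_i$ denotes the slack guaranteed at level $i$. Multiplying these probabilities along the unique path from $\emptyset$ to a vertex $v \in |\tau_0|(j)$ shows that $\tau_0(v)$ lies in an interval around $N_j^{-1} = \prod_{i=0}^{j-1} m_i^{-1}$ whose width is controlled by the $\psi(\eta_i)$'s; this is (a) at level $j$, and once $\delta$ is small it gives a uniform lower bound $\tau_0(v) \ge \alpha_j \ge \tfrac{1}{2}N_j^{-1}$. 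Plugging this into the averaged inequality at level $j$ yields $c_j - H(\widetilde{\tau}^v) \le \delta/\alpha_j$ for each $v \in |\tau_0|(j)$, which is exactly (b) at level $j+1$ with slack $\eta_j = \delta/\alpha_j$.

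The main obstacle is bookkeeping: the slacks $\eta_j$ grow roughly like $N_j\delta$ and the widths in (a) accumulate accordingly, so $\delta$ must be chosen small in terms of $\delta_1$, $\delta_2$, the modulus $\psi$, and the fixed finite bound $kn$. Since the induction terminates after a bounded number of steps, the nested dependencies resolve and a single uniform $\delta > 0$ suffices.
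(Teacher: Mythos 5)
Your proposal is correct and follows essentially the same route as the paper: a simultaneous induction on $j$ in which the averaged entropy inequality from the definition of $A_\delta$ is upgraded to pointwise lower bounds on $H(\widetilde{\tau}^v)$ using the lower bound on $\tau_0(v)$ supplied by part (a), and part (a) at the next level is recovered from the fact that near-maximal entropy (given the support constraint coming from $\bold{A_r'}$ and $E_i$) forces the one-step probabilities to be nearly uniform. Your explicit tracking of $N_j=\prod_{i<j}m_i$ and the lower bound $\tau_0(v)\ge \tfrac12 N_j^{-1}$ makes precise a point the paper leaves implicit, but the argument is the same.
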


\begin{proof}
We prove both statements simultaneously by induction on $j$. 
For $j=1$ we have $H(\widetilde{\tau}) \ge 1-\delta$ for all $\widetilde{\tau} \in A_\delta$ by~(\ref{P-entropy}), so any $\delta < \delta_2$ suffices. 
Further, observe that $H$ is a continuous function attaining its maximum at $\widetilde{\tau}$ such that $p_{\widetilde{\tau}}(B_i) =q^{-1}$ for all $i \in \Lambda$. 
Hence given $\delta_1>0$ the set $\{\tau_0(v) \colon \widetilde{\tau} \in A_\delta, v \in |\tau_0|(1)\}$ is contained in an interval of length $<\delta_1$ (containing $q^{-1}$) for $\delta$ small enough. 

Assuming the lemma is true for $j \le i < mk+1$, we prove it for $j=i+1$. 
We first consider (b). 
For $w \in |\tau_0|(i)$ with $\widetilde{\tau} \in A_\delta$ the inequality (\ref{P-entropy}) gives 
\[c_i - \delta \le P^iH(\widetilde{\tau}) = \sum_{l(v)=i} \tau_0(v)H(\widetilde{\tau}^v) \le \tau_0(w)H(\widetilde{\tau}^w) + (1-\tau_0(w)) c_i,\]
and rearranging gives
\[c_i - \frac{\delta}{\tau_0(w)} \le H(\widetilde{\tau}^w). \]
By statement (a) of the induction hypothesis 
\[\sup_{\widetilde{\tau} \in A_\delta, w \in |\tau_0|(i)} \frac{\delta}{\tau_0(w)} \to 0\] as $\delta \to 0$, so by taking $\delta$ small enough statement (b) is satisfied for $j=i+1$. 
Combining statement (a) for $j=i$ and statement (b) for $j=i+1$ with the same argument as in the base case proves statement (a), noting that if $m$ does not divide $j$ then we consider maxima of $H$ on $A_r^c$. 
\end{proof}

It follows that any $V_1^{r,m,mk}$-detecting function is positive on $A_\delta$. 
By the correspondence principle Theorem \ref{ergodic-correspondence} we have for all $\epsilon >0$
\[d^\ast(V_1^{r,m,mk}) \ge \int_{\widetilde{M}} \phi_{V_1^{r,m,mk}}\,d\widetilde{\eta_\epsilon} \ge \int_{A_\delta} \phi_{V_1^{r,m,mk}}\,d\widetilde{\eta_\epsilon} >0,\]
since $\widetilde{\eta_\epsilon}(A_\delta)>0$. 
\end{proof}

\begin{thm}\label{third_inverse_thm_trees}
Let $\beta < 3/2$ and assume that $0 < \underline{d}(G^\ast(F^r)) < \beta \cdot\frac{\dim T - \log_q(r-1)}{1-\log_q(r-1)}$. Then there exists an integer $m \ge 1$ such that $m \mathbb{N} \subset G^\ast(F^r)$.
\end{thm}

\begin{proof}

For small enough $\epsilon>0$, by the correspondence principle Theorem \ref{ergodic-correspondence} and the proof of Theorem \ref{Theorem4.1}
\[\beta\widetilde{\eta_\epsilon}(A_r) \ge \beta \frac{\dim T-\epsilon-\log_q(r-1)}{1-\log_q(r-1)} >\underline{d}(G^\ast(F^r)) \ge \underline{d}(\mathcal{R}),\]
where $\mathcal{R} = \{ n \in \mathbb{N} \colon \widetilde{\eta_\epsilon}(A_r \cap S^{-n} A_r) > 0 \}$. 
Theorem \ref{second_inverse_thm_measure} then implies that there exists $m \ge 1$ such that $m\mathbb{N} = \mathcal{R} \subset G^\ast(F^r)$. 
\end{proof}

\begin{question}
It follows from the work of Furstenberg and Weiss in \cite{FW} that for every $k$ there exists~$n$ such that $d^\ast(D^{2,k}_n)>0$ provided that $\dim T>0$. 
On the other hand, under the assumptions of Theorem \ref{third_inverse_thm_trees}, there exists $m\ge 1$ such that $m \mathbb{N} \subset G^{\ast}(F)$. 
In analogy to Proposition~\ref{second_inverse_thm_sets}, is it true that the stronger claim $d^\ast(D^{2,k}_m) > 0$ holds true for every $k$ satisfying $(1-\beta^{-1})k < 1$?
\end{question}

\appendix

\section{Stability in inverse theorem \ref{second_inverse_thm_measure}}\label{appendix}

In the proof of Theorem \ref{second_inverse_thm_trees} for the configuration $D^{r,2}$, we are unable to apply Theorem \ref{second_inverse_thm_measure} since we have no upper bound for $\underline{d}(\mathcal{R})$. 
However, we have bounds on the densities of the sets of $\delta$-return times. 
Here we prove a stability result (Theorem \ref{Appendix_thm}) giving the same conclusion as Theorem \ref{second_inverse_thm_measure} under assumptions involving $\delta$-return times instead of return times. 

Given an ergodic measure-preserving system $(X,\mathscr{B},\nu,S)$ and $A \in \mathscr{B}$ with $\nu(A)>0$, define for $\delta >0$ the set of $\delta$-return times of $A$
\[
\mathcal{R}^{\delta} = \{ n \in \mathbb{N} \colon \nu(A \cap S^{-n}A) > \delta \nu(A)^2 \}.
\]
Define also for $0<\gamma<1$ the set
\[\mathcal{R}_\gamma = \{n \in \mathbb{N} \colon \nu(A \cap S^{-n}A) \ge (1-\gamma)\nu(A)\}.\]

\begin{lemma}
\label{estimate_lemma}
If $\overline{d}(\mathcal{R}^\delta) \le (1+\eta)\nu(A)$ for all $\delta>0$, then for any $\gamma>0$
\[\underline{d}(\mathcal{R}_\gamma) \ge \left(\frac{\gamma-\eta+\gamma\eta}{\gamma}\right)\nu(A).\]
\end{lemma}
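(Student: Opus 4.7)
The plan is to estimate the Ces\`aro average $\frac{1}{N}\sum_{n=1}^{N}\nu(A\cap S^{-n}A)$ in two ways: via the mean ergodic theorem, which pins down its value exactly, and via a three-way decomposition according to the size of $\nu(A\cap S^{-n}A)$. Since $(X,\mathscr{B},\nu,S)$ is ergodic, the mean ergodic theorem applied to $1_A$ gives
\[ \lim_{N\to\infty}\frac{1}{N}\sum_{n=1}^{N}\nu(A\cap S^{-n}A)=\nu(A)^2. \]
For every $\delta<(1-\gamma)/\nu(A)$ one has $\mathcal{R}_\gamma\subset \mathcal{R}^\delta$, and I will split the sum into three disjoint groups: $n\in\mathcal{R}_\gamma$, where $\nu(A\cap S^{-n}A)\le \nu(A)$; $n\in\mathcal{R}^\delta\setminus\mathcal{R}_\gamma$, where $\nu(A\cap S^{-n}A)<(1-\gamma)\nu(A)$; and $n\notin\mathcal{R}^\delta$, where $\nu(A\cap S^{-n}A)\le \delta\nu(A)^2$.

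Next I would choose, for each $\delta>0$ separately, an increasing sequence $(N_k)$ along which $|\mathcal{R}_\gamma\cap[1,N_k]|/N_k\to\underline{d}(\mathcal{R}_\gamma)$; by a diagonalisation I may further assume that $|\mathcal{R}^\delta\cap[1,N_k]|/N_k$ converges to some limit $b(\delta)\le \overline{d}(\mathcal{R}^\delta)\le(1+\eta)\nu(A)$. Applying the decomposition along this subsequence and combining with the mean ergodic identity yields
\[ \nu(A)^2 \le \underline{d}(\mathcal{R}_\gamma)\nu(A)+\bigl(b(\delta)-\underline{d}(\mathcal{R}_\gamma)\bigr)(1-\gamma)\nu(A)+\delta\nu(A)^2. \]
Dividing by $\nu(A)$, letting $\delta\to 0$, and using $b(\delta)\le (1+\eta)\nu(A)$, I rearrange to obtain
\[ \underline{d}(\mathcal{R}_\gamma)\ge \frac{\nu(A)-(1+\eta)(1-\gamma)\nu(A)}{\gamma}=\frac{\gamma-\eta+\gamma\eta}{\gamma}\,\nu(A), \]
which is the desired inequality.

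There is no real analytic obstacle here; the argument is essentially a pigeonhole split exploiting the ergodic average. The only delicate point to handle carefully is the order of limits: the hypothesis is that $\overline{d}(\mathcal{R}^\delta)\le (1+\eta)\nu(A)$ \emph{for every} $\delta$, so I must fix $\delta$ first, extract a subsequence realising both $\underline{d}(\mathcal{R}_\gamma)$ and a convergent $|\mathcal{R}^\delta\cap[1,N_k]|/N_k$, perform the estimate, and only then send $\delta\to 0$. The term $\delta\nu(A)^2$ is then harmless and the bound on $b(\delta)$ propagates cleanly to the final inequality.
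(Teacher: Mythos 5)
Your argument is correct and is essentially identical to the paper's proof: the same three-way split of the ergodic average over $\mathcal{R}_\gamma$, $\mathcal{R}^\delta\setminus\mathcal{R}_\gamma$, and $(\mathcal{R}^\delta)^c$ along a subsequence realising $\underline{d}(\mathcal{R}_\gamma)$, followed by letting $\delta\to 0$. The only cosmetic differences are that the paper handles the second and third groups with $\limsup$s (rather than extracting a further convergent subsequence to get your $b(\delta)$) and bounds the density of $(\mathcal{R}^\delta)^c$ by $1-\underline{d}(\mathcal{R}^\delta)$ instead of $1$, neither of which affects the final inequality.
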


\begin{proof}
Given $\gamma$, choose $\delta$ such that $0 < \delta < \frac{1-\gamma}{\nu(A)}$ (so $\mathcal{R}_\gamma \subset \mathcal{R}^\delta$). 
First observe that by the mean ergodic theorem
\begin{equation}
\label{delta_nuA}
\underline{d}(\mathcal{R}^\delta) = \liminf_{N \to \infty} \frac{1}{N+1} \sum_{n=0}^N 1_{\mathcal{R}^\delta}(n) \ge \liminf_{N \to \infty} \frac{1}{N+1} \sum_{n=0}^N \frac{\nu(A \cap S^{-n}A)}{\nu(A)}-\delta\nu(A) = (1-\delta)\nu(A).
\end{equation}

Let $(N_k)_{k \ge 1}$ be an increasing sequence such that $\underline{d}(\mathcal{R}_\gamma) = \lim_{k \to \infty} \frac{1}{N_k+1} \sum_{n=0}^{N_k}1_{\mathcal{R}_\gamma}(n)$. 
By the mean ergodic theorem
\begin{align*}
\nu(A) &= \lim_{k \to \infty} \frac{1}{N_k+1} \sum_{n=0}^{N_k}\frac{\nu(A \cap S^{-n}A)}{\nu(A)} \\
&\le \limsup_{k \to \infty} \frac{1}{N_k+1}\sum_{\substack{n \le N_k \\ n \in \mathcal{R}_\gamma}} \frac{\nu(A \cap S^{-n}A)}{\nu(A)} + \limsup_{k \to \infty} \frac{1}{N_k+1}\sum_{\substack{n \le N_k \\ n \in \mathcal{R}^\delta \setminus \mathcal{R}_\gamma}} (1-\gamma) + \limsup_{k \to \infty} \frac{1}{N_k+1} \sum_{\substack{n \le N_k \\ n \in (\mathcal{R}^\delta)^c}} \delta\nu(A) \\
&\le \underline{d}(\mathcal{R}_\gamma)+(1-\gamma)\left(\overline{d}(\mathcal{R}^\delta)-\underline{d}(\mathcal{R}_\gamma)\right) + \delta\nu(A)(1-\underline{d}(\mathcal{R}^\delta)) \\
&\le \gamma\underline{d}(\mathcal{R}_\gamma) + (1-\gamma)(1+\eta)\nu(A) + \delta\nu(A)(1-(1-\delta)\nu(A))
\end{align*}
where in the last inequality we used the assumption $\overline{d}(\mathcal{R}^\delta) \le (1+\eta)\nu(A)$ and (\ref{delta_nuA}). 
Rearranging, we obtain
\[\underline{d}(\mathcal{R}_\gamma) \ge \left(\frac{\gamma-\eta+\gamma\eta-\delta+\delta\nu(A)-\delta^2\nu(A)}{\gamma}\right)\nu(A).\]
Taking $\delta \to 0$ gives the required inequality. 
\end{proof}

For $l \in \mathbb{N}$ and $\delta>0$, define the set
\[\mathcal{R}_l^\delta = \{n \in \mathbb{N} \colon \nu(A \cap S^{-n}A \cap S^{-(l+n)}A) > \delta\nu(A)^3\}.\]

\begin{lemma}
\label{erg_thm_recurrence}
If $l \in \mathcal{R}^{\delta}$, then $\underline{d}(\mathcal{R}^{\delta\varepsilon}_l) \ge (1-\varepsilon)\nu(A)$ for all $\varepsilon>0$. 
\end{lemma}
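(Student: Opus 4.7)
The plan is to eliminate one of the three shifts by grouping, reducing the problem to a two-point correlation amenable to the mean ergodic theorem. Setting $B = A \cap S^{-m}A$, the hypothesis $m \in \mathcal{R}^{\delta}$ gives $\nu(B) > \delta\nu(A)^2$, and the identity
$$\nu(A \cap S^{-n}A \cap S^{-(m+n)}A) = \nu(A \cap S^{-n}B)$$
is immediate. Thus $\mathcal{R}_m^{\delta\varepsilon} = \{n \in \mathbb{N} : \nu(A \cap S^{-n}B) > \delta\varepsilon\,\nu(A)^3\}$ can be viewed as a prescribed-threshold return-time set for the pair $(A,B)$.

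Next I would invoke the mean ergodic theorem. Since $(X,\mathscr{B},\nu,S)$ is ergodic, the Ces\`aro averages of $S^n 1_B$ converge to $\nu(B)$ in $L^2$, which after pairing with $1_A$ yields
$$\lim_{N \to \infty} \frac{1}{N}\sum_{n=1}^N \nu(A \cap S^{-n}B) = \nu(A)\nu(B).$$
Writing $\chi_n = 1_{\mathcal{R}_m^{\delta\varepsilon}}(n)$ and bounding each summand by $\delta\varepsilon\,\nu(A)^3$ when $\chi_n = 0$ and by $\nu(B)$ when $\chi_n = 1$, I obtain the pointwise majorant
$$\nu(A \cap S^{-n}B) \le \delta\varepsilon\,\nu(A)^3 + \bigl(\nu(B) - \delta\varepsilon\,\nu(A)^3\bigr)\chi_n.$$
Averaging over $1 \le n \le N$, taking liminf on the right while noting that the left-hand side converges to a genuine limit, I arrive at
$$\nu(A)\nu(B) \le \delta\varepsilon\,\nu(A)^3 + \bigl(\nu(B) - \delta\varepsilon\,\nu(A)^3\bigr)\underline{d}(\mathcal{R}_m^{\delta\varepsilon}).$$
Solving for $\underline{d}(\mathcal{R}_m^{\delta\varepsilon})$ and simplifying using $\nu(B) > \delta\nu(A)^2$ and $\nu(A) \le 1$, a short algebraic manipulation produces the desired bound $\underline{d}(\mathcal{R}_m^{\delta\varepsilon}) \ge (1-\varepsilon)\nu(A)$.

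The main subtlety is the asymmetry between liminf and limsup: I seek a lower bound on $\underline{d}(\mathcal{R}_m^{\delta\varepsilon})$ while the dominating inequality runs in the opposite direction, so the argument relies crucially on the mean ergodic theorem yielding a true limit (rather than a mere liminf or limsup), permitting $\lim \le \liminf(\cdot)$ on the bounding expression. This is where ergodicity plays an essential role: without it the Ces\`aro averages would converge only to an $S$-invariant conditional expectation of $1_B$, and passing from the pointwise majorant to the desired density bound would require additional control over the invariant factor.
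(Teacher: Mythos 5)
Your proposal is correct and follows essentially the same route as the paper: both reduce the triple correlation to the two-set correlation $\nu(A\cap S^{-n}B)$ with $B=A\cap S^{-m}A$, apply the mean ergodic theorem (using ergodicity to get the limit $\nu(A)\nu(B)$), and exploit $\nu(B)>\delta\nu(A)^2$ to compare the threshold $\delta\varepsilon\nu(A)^3$ with $\varepsilon\nu(A)\nu(B)$. The paper merely packages this as a general lower bound $\underline{d}(\mathcal{R}^{\varepsilon}_{A,B})\ge(1-\varepsilon)\max(\nu(A),\nu(B))$ for transfer times and then uses the inclusion $\mathcal{R}^{\varepsilon}_{A,A\cap S^{-m}A}\subset\mathcal{R}^{\delta\varepsilon}_m$, whereas you carry out the equivalent bookkeeping directly; your final algebraic step does check out (for $\varepsilon\le 1$, the case $\varepsilon>1$ being trivial).
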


\begin{proof}
Given $\varepsilon >0$ and $A,B \in \mathscr{B}$ of positive measure, the set of $\varepsilon$-transfer times from $A$ to $B$ is $\mathcal{R}_{A,B}^{\varepsilon} = \{ n \in \mathbb{N} \colon \nu(A \cap S^{-n}B) > \varepsilon \nu(A) \nu(B)\}$.
Observe that 
\[
\mathcal{R}^\varepsilon_{A,A\cap S^{-l}A} = \{ n \in \mathbb{N} \colon \nu(A \cap S^{-n}(A \cap S^{-l}A)) > \varepsilon \nu(A) \nu(A \cap S^{-l}A)\} \subset \mathcal{R}_{l}^{\delta \varepsilon}.
\]
By the mean ergodic theorem,
\begin{align*}
\label{mean-erg}
\underline{d}(\mathcal{R}_{A,B}^{\varepsilon}) &= \liminf_{N \to \infty} \frac{1}{N+1} \sum_{n=0}^N 1_{\mathcal{R}_{A,B}^{\varepsilon}}(n)  \\
&\ge \liminf_{N \to \infty} \frac{1}{N+1} \sum_{n=0}^N \frac{\nu(A \cap S^{-n}B)}{\min{(\nu(A),\nu(B))}} - \varepsilon \max
{(\nu(A),\nu(B))} \\
&\ge (1 - \varepsilon) \max{(\nu(A),\nu(B))},
\end{align*}
so we have
\[
\underline{d}(\mathcal{R}_{l}^{\delta \eps}) \ge \underline{d}(\mathcal{R}^\varepsilon_{A,A \cap S^{-l}A}) \ge (1 - \varepsilon) \nu(A)
\]
as required. 
\end{proof}

\begin{thm}
\label{Appendix_thm}
If for $\eta<\frac{1}{5}$ we have $\overline{d}(\mathcal{R}^{\delta}) \le (1+\eta) \nu(A)$ for every $\delta > 0$, then there exists $m \ge 1$ such that $\mathcal{R}^\delta = m \mathbb{N}$ for all sufficiently small $\delta$. 
\end{thm}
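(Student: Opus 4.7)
My plan is to mimic the proof of Theorem~\ref{first_inverse_thm_measure} by applying Kneser's theorem to the sumset $\mathcal{R}_\gamma + \mathcal{R}_\gamma$, using the two-sided density control provided by Lemmas~\ref{estimate_lemma} and~\ref{erg_thm_recurrence} together with the hypothesis. First, for $0 < \gamma < 1/2$, Lemma~\ref{lemma4} gives $\mathcal{R}_\gamma + \mathcal{R}_\gamma \subset \mathcal{R}_{2\gamma}$, and $\mathcal{R}_{2\gamma} \subset \mathcal{R}^{\delta_0}$ for any $\delta_0 < (1-2\gamma)/\nu(A)$, so the hypothesis yields $\overline{d}(\mathcal{R}_\gamma + \mathcal{R}_\gamma) \le (1+\eta)\nu(A)$. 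By Lemma~\ref{estimate_lemma} we have $\underline{d}(\mathcal{R}_\gamma) \ge \frac{\gamma(1+\eta)-\eta}{\gamma}\nu(A)$, and the condition $2\underline{d}(\mathcal{R}_\gamma) > \overline{d}(\mathcal{R}_\gamma + \mathcal{R}_\gamma)$---equivalent to $\gamma > \frac{2\eta}{1+\eta}$---triggers Kneser's theorem exactly as in Theorem~\ref{first_inverse_thm_measure}.

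Kneser then produces $k \ge 1$ and $K \subset \{0,\ldots,k-1\}$ (with $0 \in K$ since $0 \in \mathcal{R}_\gamma$) such that $\mathcal{R}_\gamma \subset K + k\mathbb{N}$, $|K+K| = 2|K| - 1$ in $\mathbb{Z}/k\mathbb{Z}$, and $\mathcal{R}_\gamma + \mathcal{R}_\gamma \subset K + K + k\mathbb{N}$ up to finitely many exceptions. Comparing $|K|/k \ge \underline{d}(\mathcal{R}_\gamma)$ with $(2|K|-1)/k \le \overline{d}(\mathcal{R}_\gamma + \mathcal{R}_\gamma)$ gives
\[
\frac{2|K|-1}{|K|} \le \frac{(1+\eta)\gamma}{\gamma(1+\eta) - \eta}.
\]
Assuming $|K| \ge 2$ forces $3/2 \le (2|K|-1)/|K|$ and hence $\gamma \le \frac{3\eta}{1+\eta}$. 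Choosing $\gamma \in \left(\frac{3\eta}{1+\eta}, \frac{1}{2}\right)$---a nonempty interval precisely because $\eta < 1/5$---forces $|K| = 1$, so $K = \{0\}$. Thus $\mathcal{R}_\gamma \subset k\mathbb{N}$ with $\underline{d}(\mathcal{R}_\gamma) = 1/k$.

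The main obstacle is upgrading $\mathcal{R}_\gamma \subset k\mathbb{N}$ to $\mathcal{R}^\delta \subset k\mathbb{N}$ for all sufficiently small $\delta$. For $m \in \mathcal{R}^\delta$, Lemma~\ref{erg_thm_recurrence} gives a set $\mathcal{R}_m^{\delta\epsilon}$ of lower density $\ge (1-\epsilon)\nu(A)$ consisting of $n$ with $n, m+n \in \mathcal{R}$; to force $m \in k\mathbb{Z}$ it suffices to locate such an $n$ with both $n$ and $m+n$ in $\mathcal{R}_\gamma \subset k\mathbb{N}$, so that $m = (m+n) - n \in k\mathbb{Z}$. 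A direct inclusion-exclusion on $\mathcal{R}_\gamma \cap (\mathcal{R}_\gamma - m) \cap \mathcal{R}_m^{\delta\epsilon}$ is insufficient, since the ambient set $\mathcal{R}^{\delta'} \cup (\mathcal{R}^{\delta'} - m)$ may have upper density up to $2(1+\eta)\nu(A)$ and the three-way lower bound becomes negative. To overcome this I would exploit ergodicity: using $\mathcal{R}_\gamma \subset k\mathbb{N}$ and Poincar\'e recurrence, the set $A' = \bigcup_{j \ge 0} S^{-jk}A$ is $S^{-k}$-invariant modulo $\nu$-null sets, and ergodicity of $S$ produces the partition $X = \bigsqcup_{i=0}^{k-1} S^{-i}A'$ of Theorem~\ref{second_inverse_thm_measure}. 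This partition forces any $m \in \mathcal{R}$ to preserve the partition, yielding $\mathcal{R} \subset k\mathbb{N}$ and hence $\mathcal{R}^\delta \subset k\mathbb{N}$.

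Finally, the reverse inclusion $k\mathbb{N} \subset \mathcal{R}^\delta$ for small $\delta$ follows from ergodicity of the induced action of $S^k$ on $A'$ combined with the hypothesis: the uniform upper-density bound on $\mathcal{R}^\delta$ prevents $\nu(A \cap S^{-jk}A)$ from dropping arbitrarily close to zero along a positive-density subset of $j$'s, providing a uniform positive lower bound $\nu(A \cap S^{-jk}A) \ge c \nu(A)^2$ and thus $\mathcal{R}^\delta = k\mathbb{N}$ for all $\delta < c$.
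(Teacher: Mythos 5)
Your first half (the Kneser step) matches the paper's argument: the same choice $\frac{3\eta}{1+\eta}<\gamma<\frac12$, the same use of Lemma~\ref{estimate_lemma}, and the same density comparison forcing $|K|=1$ and $K=\{0\}$, hence $\mathcal{R}_\gamma\subset k\mathbb{N}$. (Two small slips there: Kneser gives that $\mathcal{R}_\gamma+\mathcal{R}_\gamma$ is cofinite in $k\mathbb{N}$, so $d(\mathcal{R}_\gamma+\mathcal{R}_\gamma)=k^{-1}$, but your claim $\underline{d}(\mathcal{R}_\gamma)=k^{-1}$ does not follow; all one gets is $\underline{d}(\mathcal{R}_\gamma)\ge\frac{\gamma-\eta+\gamma\eta}{\gamma}\nu(A)>\frac1{2k}$.)

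The genuine gap is in the upgrade to $\mathcal{R}^\delta\subset k\mathbb{N}$. Your proposed route is circular: the disjointness of $A',S^{-1}A',\dots,S^{-(k-1)}A'$ (with $A'=\bigcup_{j\ge0}S^{-jk}A$) is \emph{equivalent} to $\nu(A\cap S^{-n}A)=0$ for all $n\notin k\mathbb{N}$, i.e.\ to $\mathcal{R}\subset k\mathbb{N}$, which is exactly what you are trying to prove. Knowing only that the high-correlation times $\mathcal{R}_\gamma$ lie in $k\mathbb{N}$ says nothing about small but positive correlations at other times, and ergodicity only gives that the union of the $S^{-i}A'$ is conull, not that they are disjoint. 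Moreover the statement $\mathcal{R}\subset k\mathbb{N}$ is strictly stronger than the theorem's conclusion and is not what the paper establishes. The paper's actual argument is a counting argument you dismissed too quickly: for $m\in\mathcal{R}^\delta\setminus k\mathbb{N}$, the sets $k\mathbb{N}$ (cofinitely $\mathcal{R}_\gamma+\mathcal{R}_\gamma$) and $m+(\mathcal{R}_m^{\delta^2}\cap k\mathbb{N})$ are \emph{disjoint} subsets of $\mathcal{R}^{\delta^2\nu(A)}$, and the inclusion--exclusion is performed on $\mathcal{R}_m^{\delta^2}\cap k\mathbb{N}$ inside the single ambient set $\mathcal{R}^{\delta^2\nu(A)}$ of upper density at most $(1+\eta)\nu(A)$ --- not $2(1+\eta)\nu(A)$ --- because both $\mathcal{R}_m^{\delta^2}$ and $k\mathbb{N}$ sit (up to finite sets) inside $\mathcal{R}^{\delta^2\nu(A)}$. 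This yields $2(1+\eta)\nu(A)\ge(1-\delta)\nu(A)+2k^{-1}$, which contradicts $\eta<\frac15$ and the choice of $\gamma$ for small $\delta$. Finally, your sketch of $k\mathbb{N}\subset\mathcal{R}^\delta$ is too vague to assess; the paper's argument is elementary: since $\underline{d}(\mathcal{R}_\gamma)>\frac1{2k}$ and $\mathcal{R}_\gamma\subset k\mathbb{N}$, for each $m$ one finds $n$ with $kn,k(n+m)\in\mathcal{R}_\gamma$ and then $\nu(A\cap S^{-km}A)\ge(1-2\gamma)\nu(A)>0$.
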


\begin{proof}

Fix $0 < \eta < \frac{1}{5}$ such that $\overline{d}(\mathcal{R}^{\delta}) \le (1+\eta) \nu(A)$, and choose $\gamma$ satisfying 
\begin{equation}
\label{gamma_bd}
\frac{3\eta}{1+\eta} < \gamma < \frac{1}{2}.
\end{equation} 
Observe that $\mathcal{R}_\gamma + \mathcal{R}_\gamma \subset \mathcal{R}_{2\gamma} \subset \mathcal{R}^\delta$ for $0 < \delta < \frac{1-2\gamma}{\nu(A)}$ by Lemma \ref{lemma4}. 
Noting that (\ref{gamma_bd}) implies $\gamma-\eta+\gamma\eta>0$ and $\frac{(1+\eta)\gamma}{\gamma-\eta+\gamma\eta} < 2$, Lemma \ref{estimate_lemma} gives
\begin{equation}
\label{eq_complicated}
\underline{d}(\mathcal{R}_\gamma+\mathcal{R}_\gamma) \le \underline{d}(\mathcal{R}^\delta) \le \overline{d}(\mathcal{R}^\delta) \le (1+\eta)\nu(A) \le \left(\frac{(1+\eta)\gamma}{\gamma-\eta+\gamma\eta}\right)\underline{d}(\mathcal{R}_\gamma) < 2\underline{d}(\mathcal{R}_\gamma).
\end{equation}
Kneser's theorem then gives the existence of an integer $m \ge 1$ and $K \subset \{0,1,\ldots,m-1\}$ such that 
\begin{itemize}
\item 
$\mathcal{R}_\gamma \subset K + m \mathbb{N}$,
\item 
$|K+K| = 2 |K| -1$, where the operation on the left hand side is in $ \mathbb{Z} /m \mathbb{Z}$, and
\item  
$\mathcal{R}_\gamma + \mathcal{R}_\gamma \subset K+K +m\bN$ with $ |(K+K +m\bN) \setminus (\mathcal{R}_\gamma + \mathcal{R}_\gamma)| < \infty$.
\end{itemize}
Combining this with Lemma \ref{estimate_lemma} gives
\begin{multline*}
\frac{2|K|-1}{m} = \frac{|K+K|}{m} \le \overline{d}(\mathcal{R}_\gamma+\mathcal{R}_\gamma) \le \overline{d}(\mathcal{R}^\delta) \le (1+\eta)\nu(A) \\
\le \left(\frac{(1+\eta)\gamma}{\gamma-\eta+\gamma\eta}\right)\underline{d}(\mathcal{R}_\gamma) \le \left(\frac{(1+\eta)\gamma}{\gamma-\eta+\gamma\eta}\right)\frac{|K|}{m},
\end{multline*}
and rearranging gives
\[|K| \le 1+\frac{\eta}{\gamma+\gamma\eta-2\eta} <2,\]
where the last inequality follows from (\ref{gamma_bd}). 
Hence $|K|=1$. 
Furthermore $K=\{0\}$, since otherwise $\mathcal{R}_\gamma$ and $\mathcal{R}_\gamma+\mathcal{R}_\gamma$ would be disjoint subsets of $\mathcal{R}^\delta$ giving the contradiction
\[\overline{d}(\mathcal{R}^\delta) \ge \underline{d}(\mathcal{R}_\gamma+\mathcal{R}_\gamma)+\underline{d}(\mathcal{R}_\gamma) \ge 2\underline{d}(\mathcal{R}_\gamma) > \overline{d}(\mathcal{R}^\delta).\]

We first prove that $\mathcal{R}^\delta \subset m\mathbb{N}$ for small enough $\delta>0$. 
For $l \in \mathbb{N}$ and $\delta>0$, recall 
\[\mathcal{R}_l^{\delta^2} = \{n \in \mathbb{N} \colon \nu(A \cap S^{-n}A \cap S^{-(l+n)}A) > \delta^2\nu(A)^3\}.\]
Since $\nu(A \cap S^{-(l+n)}A) \ge \nu(A \cap S^{-n}A \cap S^{-(l+n)}A)$, if $n \in \mathcal{R}_l^{\delta^2}$ then $l+n \in \mathcal{R}^{\delta^2\nu(A)}$. 
Assuming $l \in \mathcal{R}^\delta \setminus m\mathbb{N}$, we derive a contradiction. 
Observe that $\mathcal{R}_\gamma+\mathcal{R}_\gamma \subset \mathcal{R}^\delta \subset \mathcal{R}^{\delta^2\nu(A)}$, so
\begin{align*}
\overline{d}(\mathcal{R}^{\delta^2\nu(A)}) &\ge \underline{d}(\mathcal{R}_\gamma+\mathcal{R}_\gamma)+\underline{d}((l+\mathcal{R}^{\delta^2}_l) \setminus m\mathbb{N}) \\
&\ge m^{-1} + \underline{d}(l+(\mathcal{R}_l^{\delta^2} \cap m\mathbb{N})) \\
&= m^{-1}+\underline{d}(\mathcal{R}_l^{\delta^2} \cap m\mathbb{N}),
\end{align*}
where the second inequality uses the assumption on $l$. 
Since $\mathcal{R}_l^{\delta^2}, m\mathbb{N} \subset \mathcal{R}^{\delta^2\nu(A)}$ (up to a finite set), by Lemma \ref{erg_thm_recurrence}
\begin{align*}
\underline{d}(\mathcal{R}_l^{\delta^2} \cap m\mathbb{N}) &\ge \underline{d}(\mathcal{R}_l^{\delta^2})+\underline{d}(m\mathbb{N}) - \overline{d}(\mathcal{R}_l^{\delta^2} \cup m\mathbb{N}) \\
&\ge (1-\delta)\nu(A) + m^{-1} - \overline{d}(\mathcal{R}^{\delta^2\nu(A)}). 
\end{align*}
Using the hypothesis $\overline{d}(\mathcal{R}^{\delta^2\nu(A)}) \le (1+\eta)\nu(A)$ we obtain 
\begin{equation}
\label{delta_nu_A}
2(1+\eta)\nu(A) \ge 2\overline{d}(\mathcal{R}^{\delta^2\nu(A)}) \ge (1-\delta)\nu(A)+2m^{-1}.
\end{equation}
Since $|K|=1$, Kneser's theorem and Lemma \ref{estimate_lemma} imply 
\[m^{-1} \ge \underline{d}(\mathcal{R}_\gamma) \ge \left(\frac{\gamma-\eta+\gamma\eta}{\gamma}\right)\nu(A),\]
and combining with (\ref{delta_nu_A}) gives $\gamma \le \frac{2\eta}{1-\delta}$ after rearranging. 
This is compatible with (\ref{gamma_bd}) only if $\eta >\frac{1-3\delta}{2}$. 
Since $\eta < \frac{1}{5}$, it follows that the above requires $\delta>\frac{1}{5}$. 
Hence $l \in \mathcal{R}^\delta \setminus m\mathbb{N}$ gives a contradiction and $\mathcal{R}^\delta \subset m\mathbb{N}$ for small enough $\delta>0$.  

Finally we show $\mathcal{R}^{\delta} = m \mathbb{N}$ for small $\delta$. 
Indeed, since $\underline{d}(\mathcal{R}_\gamma) > \frac{1}{2m}$ by combining equation (\ref{eq_complicated}) with the third implication of Kneser's theorem $R_{\gamma} +R_{\gamma} \subset K +K + m \mathbb{N}$ and the fact that $|K| = 1$, for every $l \in \mathbb{N}$ there exists $n \in \mathbb{N}$ such that $mn,m(n+l) \in \mathcal{R}_{\gamma}$.
Therefore 
\begin{align*}
\nu(A \cap S^{-ml}A) &= \nu(S^{-mn}A \cap S^{-m(n+l)}A) \\
&\ge \nu((A \cap S^{-mn}A) \cap (A \cap S^{-m(n+l)}A)) \\
&\ge \nu(A \cap S^{-mn}A) + \nu(A \cap S^{-m(n+l)}A) - \nu(A) \\
&\ge (1-2\gamma)\nu(A) > \delta\nu(A)^2
\end{align*}
for $\delta < \frac{1-2\gamma}{\nu(A)}$, so for sufficiently small $\delta>0$ we have $ml \in \mathcal{R}^\delta$ for all $l \in \mathbb{N}$. 
\end{proof}

\subsection*{Discussion}

The set of transfer times $\mathcal{R}_{A,B}$ has strong parallels with the difference set 
$\mathcal{A}-\mathcal{B} = \{ a-b \colon a\in \mathcal{A},\, b\in \mathcal{B} \}$, $\mathcal{A}, \mathcal{B} \subset \bZ / r \bZ$, which is one of the main objects of Additive Combinatorics. 
For example, the lower bound for $\underline{d}(\mathcal{R}^\varepsilon_{A,B})$ in  Lemma \ref{erg_thm_recurrence} corresponds to the simple fact that $|\mathcal{A} - \mathcal{B}| \ge \max\{|\mathcal{A}|, |\mathcal{B}| \}$.
It is easy to see that the bound is tight 
and is attained when 
$\mathcal{B} - \mathcal{B}$ belongs to the centraliser of $\mathcal{A}$ (or vice versa). 
It implies that $\mathcal{A}$ and $\mathcal{B}$ have some periodic structure and it is analogous to our conclusions in Theorems \ref{second_inverse_thm_measure} and \ref{Appendix_thm} on the structure of our dynamical system.
On the other hand, if $\mathcal{A} = \{0,1\} \subseteq \bZ/r\bZ$ for large $r$, then $\mathcal{A} - \mathcal{A} = \{0,1,-1\}$ and hence $\eta$ in Theorem \ref{Appendix_thm} must be less than $1/2$.
Moreover, the sets $\mathcal{R}_m^\delta$ from Lemma \ref{erg_thm_recurrence} which are used in the proof of Theorem \ref{Appendix_thm} can be thought as a dynamical version of the well--known combinatorial  $e$--transform, see, e.g., \cite[Section 5.1]{TV}.
Although it is non--obvious how to define the higher sumsets in the dynamical context, an analogue of the Pl\"unnecke--Rusza triangle inequality for dynamical systems would be a first step towards such a theory.

\begin{question}
Assume that $(X,\mathscr{B},\nu,S)$ is an invertible ergodic system and $d(\mathcal{R}_{A,B})$, $d(\mathcal{R}_{A,C})$, $d(\mathcal{R}_{B,C})$ exist for $A,B,C \in \mathscr{B}$.
Is it true that 
\[\mu(C) d(\mathcal{R}_{A,B}) \le d(\mathcal{R}_{A,C}) d(\mathcal{R}_{B,C}) \,?\]
\end{question}

\end{document}